\newcommand{\C}			{\mathbb C}
\newcommand{\R}			{\mathbb R}
\newcommand{\Q}			{\mathbb Q}
\newcommand{\U}			{\mathbf U}
\newcommand{\f}			{\mathbf f}
\newcommand{\g}			{\mathbf g}
\newcommand{\h}			{\mathbf h}
\newcommand{\p}			{\mathbf p}
\newcommand{\q}			{\mathbf q}
\newcommand{\e}			{\mathbf e}
\newcommand{\dd}		{\mathbf d}
\newcommand{\rr}		{\mathbf r}
\newcommand{\vv}		{\mathbf v}
\newcommand{\map}		{\longrightarrow}
\newcommand{\Lin}		{\mathcal L}
\newcommand{\Linfty}	{L^\infty}
\newcommand{\Ltwo}		{L^2}
\newcommand{\Lone}		{L^1}
\newcommand{\Lp}		{L^p}
\newcommand{\grad}		{\nabla}
\newcommand{\cross}		{\times}
\newcommand{\curl}		{\operatorname{curl}}
\newcommand{\supp}		{\operatorname{supp}}
\newtheorem{theorem}{Theorem}[section]
\newtheorem{lemma}[theorem]{Lemma}
\newtheorem{proposition}[theorem]{Proposition}
\newtheorem{corollary}[theorem]{Corollary}
\newtheorem{conjecture}[theorem]{Conjecture}
\newtheorem{mconjecture}[theorem]{Meta-Conjecture}
\numberwithin{equation}{section}
\begin{document}

\title{On the unfolding of simple closed curves}

\author{John Pardon}
\address{Durham Academy Upper School\\3601 Ridge Road\\Durham, North Carolina 27705}
\curraddr{Princeton University\\Princeton, New Jersey 08544}
\email{jpardon@princeton.edu}

\subjclass[2000]{Primary 53C24; Secondary 53A04}

\date{January 2, 2007}

\begin{abstract}
I show that every rectifiable simple closed curve in the 
plane can be continuously deformed into a convex 
curve in a motion which preserves arc length and does 
not decrease the Euclidean distance between any pair of 
points on the curve.  This result is obtained by approximating 
the curve with polygons and invoking the result of 
Connelly, Demaine, and Rote that such a motion exists 
for polygons.  I also formulate a generalization of their 
program, thereby making steps toward a fully continuous 
proof of the result.  To facilitate this, I generalize 
two of the primary tools used in their program: 
the Farkas Lemma of linear programming to Banach 
spaces and the Maxwell-Cremona Theorem of rigidity theory 
to apply to stresses represented by measures on the plane.
\end{abstract}

\maketitle

\section{Introduction}\label{intro}

Imagine a loop of string lying flat on a table without 
crossing itself.  Now suppose the loop is slowly 
deformed until it becomes convex, without stretching 
or breaking it, in an {\it expansive} motion.  
By expansive, I mean that 
if you pick any pair of points on the string, then during 
the deformation, the distance between them will be 
nondecreasing.  Then we can ask whether, given 
an initial loop, there always exists an expansive motion 
which deforms that loop until it becomes convex.  If the 
loop is a polygon, then the answer is yes, as proved by 
Connelly, Demaine, and Rote \cite{cdr}.  The first theorem 
of this paper (Theorem \ref{main}) is that the answer is yes for any rectifiable 
curve, no matter how complicated (in section \ref{path}, 
we give some examples of pathological curves to which the 
theorem applies).  This solves Problem 4 listed by Ghomi 
\cite[p. 1]{open}.

My proof of the main theorem uses a limiting process, 
relying on the result of \cite{cdr}.  I next generalize 
the program used in \cite{cdr}, which 
relies on techniques of linear programming, specifically the Farkas 
Lemma.  This approach naturally lends itself to computation; 
an example of research on the computation of nonexpansive unfoldings 
of polygons is given by \cite{energy}.  In my continuous 
analogue of the program, I develop a version of the Farkas 
Lemma for Banach spaces (Theorem \ref{farkas}) as well as a 
continuous version of the Maxwell-Cremona Theorem (Theorem 
\ref{maxwell}), a combinatorial version of which was used in 
the program in \cite{cdr}.  A different version of the Farkas Lemma 
in Banach spaces and specifically in $\Lp$ spaces has been studied 
in \cite{farkas}.  I am not aware of any previous 
generalization of the Maxwell-Cremona Theorem to the 
case I consider here.  Finally, I use the continuous 
version of the program to give a different proof of the existence 
of infinitesimal expansions for polygons.  The hope is that 
a continuous analogue of the discrete program could yield 
a direct proof (one which does not rely on approximation by 
polygons) of the main theorem for some class of curves more 
general than polygons.

I would like to thank Robert Bryant for many useful 
conversations about the work in this paper, regarding both 
its content and presentation, and Robert Connelly for 
suggesting some reogranization to clarify the results.  
I also thank Andrew Ferrari for introducing me to many of 
the techniques used here.

\subsection{Notation}

We will use the following function spaces:
\begin{description}
\item[$C(X,Y)$] the Banach space of continuous functions from 
$X$ to $Y$ given the supremum norm.
\item[$C_c(X,Y)$] the subspace of $C(X,Y)$ consisting of 
functions of compact support.
\item[$C_0(X,Y)$] the Banach space completion of $C_c(X,Y)$ 
with respect to the supremum norm.  These are the functions 
that ``vanish at infinity''.
\item[$C_0^\infty(X,Y)$] the subspace of $C_c(X,Y)$ consisting 
of infinitely differentiable functions.
\item[$\Lp(X,Y)$] the Banach space of $\Lp$ functions from 
$X$ to $Y$.
\end{description}
If $Y$ is left out, it is assumed to be $\R$, except in 
section \ref{cpx}, where it is assumed to be $\C$.  
All Hilbert and Banach spaces are implicitly assumed to 
be over $\R$, except in section \ref{cpx}, where they will 
be over $\C$.  If $E$ is a Banach space, $E^*$ is its dual.  
The duality bracket $\langle x,y\rangle$ will be used both 
in the case that $x\in E^*$ and $y\in E$, and in the case 
that $x,y\in H$, a Hilbert space.  We will write $\Lin(X,Y)$ 
for the Banach space of bounded linear transformations from 
$X$ to $Y$ given the operator norm.

\section{Proof for General Curves using \cite{cdr}}\label{cpx}

\subsection{Preliminaries}

Consider a simple closed curve in the plane.  I wish to prove 
the existence of a continuous deformation of the curve into 
a convex curve, so that the intrinsic distance between every 
pair of points on the curve stays constant, and the extrinsic 
distance between every pair of points on the curve is 
nondecreasing.  Here, by intrinsic distance I mean the distance 
along the curve, and by extrinsic distance I mean the Euclidean 
distance in $\R^2$.

A curve is called {\it rectifiable} if a finite intrinsic 
distance can be defined between every pair of points, that is, 
the supremum of the lengths of all inscribed 
polygons is finite:
\begin{equation}
L_x^y(\f):=\sup_{x=a_0<a_1<\cdots<a_k=y}\sum_{j=1}^k|\f(a_j)-\f(a_{j-1})|<\infty
\end{equation}
We will only consider rectifiable curves in 
this paper.  If a curve is rectifiable, then it has a {\it unit 
speed parameterization}, that is $\f(s)=\int_0^s\f'(s')\,ds'$ and 
$|\f'(s)|=1$ almost everywhere.  Since a homothety will scale the 
arc length of a curve, it suffices to consider simple closed 
curves of length $2\pi$.  Thus, given $\f_0$, we seek a 
continuous family of simple closed curves $\f_t:\R/2\pi\map\R^2$ 
parameterized by $t\in[0,1]$ such that each curve is of unit 
speed, $|\f_{t_1}(x)-\f_{t_1}(y)|\leq|\f_{t_2}(x)-\f_{t_2}(y)|$ 
whenever $t_1\leq t_2$, and $\f_1$ is convex.

\subsection{Main Result}

For this section, it will be natural to consider curves 
in $\C$ (rather than $\R^2$).  Thus Banach spaces will be 
over $\C$.  It will be convenient to have our curves 
reside in the following space:
\begin{equation}
\mathcal D:=\left\{\f:\R/2\pi\map\C\Bigm|
\text{$\f(0)=0$, $\f$ absolutely continuous, $\f'\in\Linfty(\R/2\pi)$}\right\}
\end{equation}
There is, of course, the natural correspondence between 
$\f\in\mathcal D$ and $\f'\in\{u\in\Linfty(\R/2\pi):\int u=0\}$.  
Thus $\mathcal D$ is a Banach space with norm $\|\f'\|_\infty$.  Now 
topologize $\mathcal D$ using the weak-$*$ topology on 
$\Linfty(\R/2\pi)$.  Since $\Lone(\R/2\pi)$ is separable, 
the Banach-Alaoglu Theorem implies that any norm bounded 
sequence in $\mathcal D$ has a convergent subsequence.  
The choice of topology on $\mathcal D$ is justified by 
the following lemma.

\begin{lemma}\label{unif}
Suppose $\f_n\to\f$ in $\mathcal D$, then $\f_n\to\f$ uniformly.
\end{lemma}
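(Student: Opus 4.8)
The plan is to unwind the definition of convergence in $\mathcal D$ and then combine pointwise convergence with equicontinuity. By construction, $\f_n\to\f$ in $\mathcal D$ means exactly that $\f_n'\to\f'$ in the weak-$*$ topology of $\Linfty(\R/2\pi)$, i.e. $\int_0^{2\pi}\f_n'g\to\int_0^{2\pi}\f'g$ for every $g\in\Lone(\R/2\pi)$. The key observation is that for each fixed $x\in[0,2\pi]$ the indicator function $\chi_{[0,x]}$ lies in $\Lone(\R/2\pi)$, so testing against it and using $\f_n(0)=\f(0)=0$ gives
\begin{equation}
\f_n(x)=\int_0^x\f_n'(s)\,ds\longrightarrow\int_0^x\f'(s)\,ds=\f(x);
\end{equation}
hence $\f_n\to\f$ pointwise on $\R/2\pi$.

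Next I would promote pointwise convergence to uniform convergence via equicontinuity. A weak-$*$ convergent sequence in a dual Banach space is norm bounded: applying the uniform boundedness principle to the functionals $\f_n'$ acting on $\Lone(\R/2\pi)$, there is a constant $M$ with $\|\f_n'\|_\infty\leq M$ for all $n$. Consequently each $\f_n$ is $M$-Lipschitz, $|\f_n(x)-\f_n(y)|\leq M|x-y|$, and $\f$ is likewise Lipschitz (being a member of $\mathcal D$, so $\f'\in\Linfty$). Thus the family $\{\f_n\}$, together with $\f$, is equicontinuous on the compact circle $\R/2\pi$.

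Finally, pointwise convergence plus equicontinuity on a compact set yields uniform convergence by the standard $\varepsilon/3$ argument: given $\varepsilon>0$, pick $\delta>0$ so that $|x-y|<\delta$ forces $|\f_n(x)-\f_n(y)|<\varepsilon/3$ for all $n$ and $|\f(x)-\f(y)|<\varepsilon/3$; cover $\R/2\pi$ by finitely many $\delta$-neighborhoods of points $x_1,\dots,x_k$; choose $N$ so that $|\f_n(x_i)-\f(x_i)|<\varepsilon/3$ for all $i$ and all $n\geq N$; then for arbitrary $x$, choosing $x_i$ within $\delta$ of $x$ gives $|\f_n(x)-\f(x)|<\varepsilon$ whenever $n\geq N$.

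I do not expect a serious obstacle here. The two points worth a little care are recognizing that indicator functions of subintervals are legitimate $\Lone$ test functions, so that pointwise convergence is essentially immediate from the chosen topology, and noting that the uniform Lipschitz bound — which makes equicontinuity automatic — comes for free from the norm-boundedness of weak-$*$ convergent sequences.
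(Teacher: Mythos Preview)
Your proof is correct and follows essentially the same approach as the paper's: both use the Uniform Boundedness Principle to get a common Lipschitz bound (hence equicontinuity), test weak-$*$ convergence against $\chi_{[0,x]}$ to obtain pointwise convergence, and then conclude uniform convergence from equicontinuity plus pointwise convergence on a compact domain. The only difference is cosmetic: you spell out the $\varepsilon/3$ argument explicitly, while the paper simply cites the standard fact.
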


\begin{proof}
By the Uniform Boundedness Principle, we know that 
$\|\f_n\|$ is bounded.  Thus there exists $M$ 
with $|\f_n'|\leq M$, hence $\{\f_n\}$ is an equicontinuous family.  
It is clear that $\f_n\to\f$ pointwise since we have 
$\int\chi_{[0,x]}\f_n'\to\int\chi_{[0,x]}\f'$.  And 
an equicontinuous sequence of functions converges 
pointwise if and only if it converges uniformly.
\end{proof}

Define the continuous function $\mathcal E:\mathcal D\map\R$ by 
$\mathcal E(\f)=\iint_{(\R/2\pi)^2}|\f(x)-\f(y)|$.  Also define the following 
order relation on $\mathcal D$: we say that $\f\trianglelefteq\g$ 
if and only if $|\f(x)-\f(y)|\leq|\g(x)-\g(y)|$ 
for all $x$ and $y$.

\begin{theorem}\label{main}
Given a unit speed simple closed curve $\f:\R/2\pi\map\C$, 
there exists a continuous function $\h:[0,1]\map\mathcal D$ such 
that:
\begin{itemize}
\item[(1)] $\h(0)=\f$.
\item[(2)] $\h(1)$ is convex.
\item[(3)] $\h(t)$ has unit speed for all $t$.
\item[(4)] If $t_1\leq t_2$, then $\h(t_1)\trianglelefteq\h(t_2)$.
\end{itemize}
\end{theorem}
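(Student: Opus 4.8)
The plan is to obtain $\h$ as a limit of the polygonal expansive motions guaranteed by Connelly--Demaine--Rote. First I would approximate the unit-speed curve $\f$ by inscribed polygons: for each $n$, let $P_n$ be the polygon with vertices $\f(2\pi j/n)$, $j=0,\dots,n-1$, and reparameterize $P_n$ by arc length after rescaling so that its total length is $2\pi$; call the resulting unit-speed curve $\f_n \in \mathcal D$. Since $\f$ is unit speed, $\f_n \to \f$ uniformly, and one checks $\f_n \to \f$ in $\mathcal D$ (the derivatives $\f_n'$ are uniformly bounded by $1$ after rescaling, and converge weak-$*$ because $\f_n\to\f$ uniformly forces $\int \chi_{[0,x]}\f_n' \to \int\chi_{[0,x]}\f'$, which determines the weak-$*$ limit on a total subset of $\Lone$).

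Next I would invoke \cite{cdr} to get, for each $n$, an expansive motion $\h_n:[0,1]\map\mathcal D$ of the polygon $\f_n$: $\h_n(0)=\f_n$, $\h_n(1)$ convex, each $\h_n(t)$ unit speed, and $\h_n(t_1)\trianglelefteq\h_n(t_2)$ for $t_1\le t_2$. The key point is that the family $\{\h_n\}$ is uniformly bounded in $\mathcal D$ (every $\h_n(t)$ is unit speed, hence $\|\h_n(t)'\|_\infty = 1$ and, with $\h_n(t)(0)=0$, the curves stay in a fixed ball of $\C$), and moreover equicontinuous in $t$: expansiveness plus the fixed total length forces a uniform modulus of continuity for $t\mapsto \h_n(t)$ in the sup norm — the diameter is nondecreasing and bounded by $\pi$, so the geometry cannot change too fast. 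I would then apply an Arzelà--Ascoli / diagonal argument, using that bounded subsets of $\mathcal D$ are sequentially weak-$*$ compact (Banach--Alaoglu, as noted after the definition of $\mathcal D$, since $\Lone(\R/2\pi)$ is separable), to extract a subsequence with $\h_n(t) \to \h(t)$ in $\mathcal D$ for all $t\in[0,1]$, with $\h:[0,1]\map\mathcal D$ continuous.

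Finally I would verify the four properties survive the limit. Property (1) follows from $\h_n(0)=\f_n\to\f$. Property (3): each $\h_n(t)$ has unit speed, and unit speed is preserved since $|\h(t)(x)-\h(t)(y)| = \lim |\h_n(t)(x)-\h_n(t)(y)|$ and the length functional is lower semicontinuous, while it cannot increase because arc length is preserved along each $\h_n$ and in the limit — one shows $|\h(t)'|=1$ a.e.\ directly from uniform convergence and the unit-speed bound. Property (4): $|\h_n(t_1)(x)-\h_n(t_1)(y)|\le|\h_n(t_2)(x)-\h_n(t_2)(y)|$ passes to the limit by uniform convergence. Property (2): $\h_n(1)$ is convex; convexity of a curve is a closed condition under uniform convergence (it is equivalent to the winding/turning of the tangent being monotone, or to lying on the boundary of its convex hull), so $\h(1)$ is convex. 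The main obstacle I expect is the equicontinuity in $t$: the cited theorem gives \emph{existence} of an expansive motion for each polygon but no quantitative control uniform in $n$, so I would need to either reparameterize each $\h_n$ in $t$ (e.g.\ by the total increase of $\mathcal E$, which is bounded since $\mathcal E(\h_n(1))\le \mathcal E(\text{fixed convex curve of length }2\pi)$ and $\mathcal E$ is monotone along the motion by (4)) to force a common modulus of continuity, or argue compactness of the motions at the level of the monotone curve-valued paths and then extract a continuous selection. Handling this reparameterization carefully — so that the limit path is genuinely continuous and still satisfies (1)–(4) — is the crux of the argument.
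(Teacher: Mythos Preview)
Your strategy matches the paper's: approximate by inscribed polygons, invoke \cite{cdr}, reparameterize each motion so that $\mathcal E(\h_n(t))$ is linear in $t$, and pass to a limit by Banach--Alaoglu plus a diagonal argument. You also correctly flag the reparameterization by $\mathcal E$ as the device that gives control in the time variable. Two genuine gaps remain, however.

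First, the inscribed polygon $P_n$ need not be simple, so \cite{cdr} does not apply directly; the paper repairs this by replacing $P_n$ with the boundary of the region of largest area it encloses, parameterized at constant speed $s_n\le 1$ (not rescaled to unit speed --- unit speed of the limit is then deduced from expansiveness together with (1)). Second, and more seriously, you have not normalized rotations. Curves in $\mathcal D$ satisfy $\f(0)=0$, which pins down translations, but nothing prevents the polygonal motions from drifting rotationally at different rates for different $n$, producing a limit $\h$ with rotational jump discontinuities. The paper imposes $\h_n(t)(\pi)\in\R_{>0}$ for all $t$. With this in hand, the continuity argument is not an equicontinuity estimate at all: the paper defines $\h(t)$ as an arbitrary subsequential limit along rationals and then argues, for any putative discontinuity at $t$, that a subsequential limit $\g$ of $\h(q_j)$ satisfies $\mathcal E(\g)=\mathcal E(\h(t))$ (by linearity of $\mathcal E$) and $\g\trianglelefteq\h(t)$ or $\g\trianglerighteq\h(t)$ (by monotonicity), forcing all pairwise distances to agree; then $\g(0)=\h(t)(0)=0$ and $\g(\pi),\h(t)(\pi)\in\R_{>0}$ force $\g=\h(t)$. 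Your sketch that ``the geometry cannot change too fast'' does not give a modulus of continuity, and even after reparameterizing by $\mathcal E$ you would still need the rotational normalization to make the limit continuous.
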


\begin{proof}
For $n\geq 3$, consider the polygon $\mathcal P_n$ 
inscribed in $\f$ which has $n$ vertices spaced 
out at multiples of $2\pi/n$ starting at zero.  
Explicitly:
\begin{equation}
\mathcal P_n(x):=\left(1-\left\{\frac{nx}{2\pi}\right\}\right)
\f\left(\frac{2\pi}n\left\lfloor\frac{nx}{2\pi}\right\rfloor\right)
+\left\{\frac{nx}{2\pi}\right\}
\f\left(\frac{2\pi}n\left(\left\lfloor\frac{nx}{2\pi}\right\rfloor+1\right)\right)
\end{equation}
This polygon may or may not be simple.  It will, 
however, divide the plane into a finite number of 
simply connected regions.  Let $\mathcal P_n'$ be a constant 
speed $s_n\leq 1$ parameterization of the boundary 
of that region which has greatest area.  Then let 
$\h_n:[0,1]\map\mathcal D$ be continuous and satisfy:
\begin{itemize}
\item[(1$'$)] $\h_n(0)=\mathcal P_n'$.
\item[(2$'$)] $\h_n(1)$ is convex.
\item[(3$'$)] $\h_n(t)$ has speed $s_n\leq 1$ for all $t$.
\item[(4$'$)] If $t_1\leq t_2$, then $\h_n(t_1)\trianglelefteq\h_n(t_2)$.
\item[(5$'$)] $\h_n(t)(\pi)\in\R_{>0}$ for all $t$.
\item[(6$'$)] $\mathcal E(\h_n(t))$ is a linear function of $t$.
\end{itemize}
The existence of an $\h_n$ satisfying (1$'$)--(4$'$) is 
implied by Theorem 1 of \cite[p. 207]{cdr}.  
Condition (5$'$) can be achieved by properly rotating 
each curve.  The motion of \cite{cdr} is {\it strictly} 
expansive, so $\mathcal E(\h_n(t))$ will be strictly 
increasing, so a simple reparameterization in $t$ 
suffices to make it linear and satisfy (6$'$).

Let $\Q_{[0,1]}=\Q\cap[0,1]$.  
This set is countable; suppose $\{r_i\}_{i=1}^\infty$ 
is a counting of it.  Let $\h_n^{(0)}=\h_n$.  
Inductively, let $\h_n^{(i)}$ be a subsequence of 
$\h_n^{(i-1)}$ such that $\h_n^{(i)}(r_i)$ converges.  
(Such a subsequence is guaranteed to exist since 
$\|\h_n^{(i)}(r_i)\|=s_n$ is bounded).  Now $\h_j^{(j)}$ 
converges pointwise to a function $\tilde{\h}:\Q_{[0,1]}\map\mathcal D$ 
which satisfies:
\begin{itemize}
\item[(1$''$)] $\tilde{\h}(0)(\R/2\pi)=\f(\R/2\pi)$.
\item[(2$''$)] $\tilde{\h}(1)$ is convex.
\item[(3$''$)] $\tilde{\h}(t)$ has speed $\leq 1$.
\item[(4$''$)] If $t_1\leq t_2$, then $\tilde{\h}(t_1)\trianglelefteq\tilde{\h}(t_2)$.
\item[(5$''$)] $\tilde{\h}(t)(\pi)\in\R_{>0}$ for all $t$.
\item[(6$''$)] $\mathcal E(\tilde{\h}(t))$ is a linear function of $t$.
\end{itemize}
We will now construct $\h:[0,1]\map\mathcal D$.  For every 
$t\in[0,1]$, we set $\h(t)$ to be some arbitrary subsequential 
limit of $\tilde{\h}(q_j)$ where $q_j$ is some sequence of 
rationals converging to $t$.  Clearly $\h$ satisfies 
(1$''$)--(6$''$) as well.  Now (1$''$) and (3$''$) together 
mean that $\h(0)(s)=\f(s+\Delta)$ for some 
$\Delta$.  We can take $\Delta=0$.  Hence we have 
(1), (2), and (4).  To prove (3), note that:
\begin{equation}
\left|\frac{\f(x+h)-\f(x)}h\right|\leq\left|\frac{\h(t)(x+h)-\h(t)(x)}h\right|\leq 1
\end{equation}
As $h\to 0$, the left hand side approaches $1$ for 
almost all $x$, hence $|\h(t)'(x)|=1$ almost 
everywhere as desired.

Finally, we must show that $\h$ is in fact continuous.  
This follows from (5$''$) and (6$''$) in the following way.  
Suppose the contrary, that there is some $t$ where 
$\h$ is not continuous.  Then there exists a sequence 
$q_j\to t$ with either $q_j<t$ for all $j$ or $q_j>t$ 
for all $j$, and a neighborhood $N$ of $\h(t)$ such that
$\h(q_j)\notin N$ for all $j$.  Now a subsequence of 
$\h(q_j)$ will converge in $\mathcal D$ to a limit $\g$.  
Now we have:
\begin{itemize}
\item[(i)] $\mathcal E(\g)=\mathcal E(\h(t))$
\item[(ii)] $\g\trianglelefteq\h(t)$ or $\g\trianglerighteq\h(t)$ 
depending on whether $q_j<t$ or $q_j>t$
\item[(iii)] $\g(0)=0=\h(t)(0)$
\item[(iv)] $\g(\pi),\h(t)(\pi)\in\R_{>0}$
\end{itemize}
The conditions (i) and (ii) imply 
that $|\g(x)-\g(y)|=|\h(t)(x)-\h(t)(y)|$ for 
all $x$ and $y$.  This means that the curves are 
rigid motions of each other.  Then (iii) and (iv) 
imply that they are actually the same 
curve since they have the same orientation.  Thus a 
subsequence of $\h(q_j)$ converges to $\h(t)$.  This 
is of course a contradiction since each $\h(q_j)$ 
is outside the neighborhood $N$ of $\h(t)$.  
This contradiction proves that $\h$ is continuous.
\end{proof}

\subsection{Pathological Rectifiable Curves}\label{path}

Define $f_-$ and $f_+$:
\begin{equation}
f_\pm(x)=\begin{cases}x^2\sin x^{-1}\pm e^{-1/x}&x>0\cr 0&x=0\end{cases}
\end{equation}
If we plot $f_-$ and $f_+$ on $[0,\pi^{-1}]$ and add 
line segments around the left side of the curve to close 
it, we get an infinite number of interlocking ``teeth''.  
This example is based on a polygon with a finite number 
of such teeth unfolded by Erik Demaine.  We also have:
\begin{equation}
g(t)=\begin{cases}t^2e^{i/t}&t>0\cr 0&t=0\cr-t^2e^{-i/t}&t<0\end{cases}
\end{equation}
Plotting $g$ on $[-\pi^{-1},\pi^{-1}]$ and adding line 
segments to close the curve gives a simple closed curve 
with an infinite spiral.  By Theorem \ref{main}, 
both of these curves can be unfolded in an expansive 
motion, something which is not at all intuitive considering 
their geometry.

\section{A Generalization of the CDR Program}

The program in \cite{cdr} proves the existence of an infinitesimal 
expansion for any polygon.  That is, if a nonconvex polygon has 
verticies $\p_i$, it shows the existence of velocities $\vv_i$ satisfying:
\begin{align}
(\p_i-\p_{i+1})\cdot(\vv_i-\vv_{i+1})&=0\\
(\p_i-\p_j)\cdot(\vv_i-\vv_j)&>0\text{ for $i$ and $j$ not adjacent}
\end{align}
From this, it is relatively straightforward to solve a 
differential equation of the form $\frac d{dt}\{\p_i\}=\{\tilde{\vv}_i\}$ 
(where the $\{\tilde{\vv}_i\}$ depend continuously on the $\{\p_i\}$), 
thus constructing an expansive motion of the polygon.  Clearly, 
if we have a curve $\f$, then the analogue is to find a variation 
$\varphi$ satisfying:
\begin{align}
\f'(x)\cdot\varphi'(x)&=0\text{ for all $x$}\\
(\f(x)-\f(y))\cdot(\varphi(x)-\varphi(y))&\geq 0\text{ for all $x$ and $y$}
\end{align}

The generalized program developed here will be able to 
prove the existence of infinitesimal expansions for 
polygons, a hard theoretical result of \cite{cdr}.  
It also proves the existence of ``almost'' expansive variations 
for all rectifiable curves which in a neighborhood of any point 
look like the rotated graph of a function from $\R$ to $\R$.  
By this I mean that for every $x\in\R/2\pi$, there exists 
$\vv\in\R^2$ such that $\f(y)\cdot\vv$ is one to one in a 
neighborhood of $x$.  The final result of this generalized 
program is Theorem \ref{gen}.

The generalizations of the Farkas Lemma and the Maxwell-Cremona 
Theorem, the tools used in the program, are stated and proved 
in sections \ref{farkass} and \ref{maxwells} respectively.

\subsection{Notation}

Let $H:=\{u\in C(\R/2\pi,\R^2):u(0)=0$, $u$ absolutely 
continuous, and $\int|u'|^2<\infty\}$.  So that $H$ is a Hilbert 
space, equip it with the norm $\sqrt{\int|u'|^2}$ 
and inner product $\int u'\cdot v'$.  Topologize $H$ with 
the weak topology.  We will need the sets:
\begin{align}
Q_\f&:=\{u\in H:u'\cdot\f'\equiv 0\}\text{ (a closed subspace)}\\
T&:=\{t\in C((\R/2\pi)^2)^*:t\geq 0\}
\end{align}
Note that we will be looking for $\varphi\in Q_\f$, since it is 
these variations which preserve arc length.  Also note that in 
this section, we do not assume that $\f$ is parameterized by 
arc length.

\begin{lemma}\label{unif2}
If $\g_n\to\g$ in the weak topology on $H$, then 
$\g_n\to\g$ uniformly.
\end{lemma}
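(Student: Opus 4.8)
The plan is to mirror the proof of Lemma \ref{unif}, replacing the Banach--Alaoglu/weak-$*$ argument there with the corresponding weak-topology argument in the Hilbert space $H$. First I would invoke the Uniform Boundedness Principle: since $\g_n\to\g$ weakly in $H$, the sequence $\{\g_n\}$ is weakly bounded, hence norm bounded, so there is $M$ with $\sqrt{\int|\g_n'|^2}\leq M$ for all $n$. Then for any $x\leq y$ in $\R/2\pi$, Cauchy--Schwarz gives
\begin{equation}
|\g_n(y)-\g_n(x)|=\left|\int_x^y\g_n'\right|\leq\left(\int_x^y|\g_n'|^2\right)^{1/2}(y-x)^{1/2}\leq M\,|y-x|^{1/2},
\end{equation}
so $\{\g_n\}$ is a uniformly equicontinuous family (with a H\"older modulus of continuity).

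Next I would check pointwise convergence. For fixed $x\in\R/2\pi$, the functional $u\mapsto u(x)$ is bounded on $H$ (indeed $|u(x)|=|\int_0^x u'|\leq\|u\|\,(2\pi)^{1/2}$ by the same Cauchy--Schwarz estimate and $u(0)=0$), hence it is represented by the inner product against some fixed element of $H$; therefore weak convergence $\g_n\to\g$ forces $\g_n(x)\to\g(x)$. (Alternatively one observes $\g_n(x)-\g_n(y)=\langle\g_n, k_{x,y}\rangle$ for the reproducing-kernel-type element $k_{x,y}\in H$ with derivative $\chi_{[y,x]}$ up to the mean-zero correction, and uses $\g_n(0)=0$.) So $\g_n\to\g$ pointwise.

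Finally, an equicontinuous family on the compact space $\R/2\pi$ that converges pointwise converges uniformly: this is the same Arzel\`a--Ascoli-type fact cited at the end of the proof of Lemma \ref{unif}. Combining equicontinuity with pointwise convergence yields $\g_n\to\g$ uniformly, which is the claim.

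I do not expect any genuine obstacle here; the only point requiring a little care is making precise that point evaluations (or the differences $u(x)-u(y)$) are continuous linear functionals on $H$ so that weak convergence yields pointwise convergence — but this is immediate from the bound $|u(x)-u(y)|\leq\|u\|\,|x-y|^{1/2}$ together with the normalization $u(0)=0$ built into the definition of $H$. Everything else is a direct transcription of the argument already given for Lemma \ref{unif}.
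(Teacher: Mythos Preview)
Your proof is correct and follows essentially the same approach as the paper: boundedness of $\|\g_n\|$ via the Uniform Boundedness Principle, a Cauchy--Schwarz estimate yielding the H\"older equicontinuity $|\g_n(x)-\g_n(y)|\leq M|x-y|^{1/2}$, pointwise convergence from weak convergence, and then the equicontinuity-plus-pointwise-implies-uniform argument. You are in fact a bit more careful than the paper in justifying the pointwise convergence step (the paper simply asserts it), but otherwise the arguments coincide.
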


\begin{proof}
This is completely analogous to Lemma \ref{unif}.  We know that 
$\g_n\to\g$ pointwise.  Observing that $\|\g_n\|$ is bounded, we 
have the inequality:
\begin{equation}
\int_a^b|\g_n'|=\int_{\R/2\pi}\g_n'\frac{\g_n'}{|\g_n'|}\chi_{[a,b]}\leq
\sqrt{\int_{\R/2\pi}|\g_n'|^2}\sqrt{\int_a^b\left|\frac{\g_n'}{|\g_n'|}\right|^2}
\leq M\sqrt{b-a}
\end{equation}
This shows that $\g_n$ are uniformly continuous, and hence 
converge uniformly.
\end{proof}

Define $D\subset H$, the set of curves we will consider, 
to be the set of $\f\in H$ satisfying:
\begin{itemize}
\item[(1)] $\f$ is a simple closed curve, that is $\f$ is injective.
\item[(2)] $\f'\ne 0$ almost everywhere (this in fact is not implied by (1)).
\item[(3)] For every $x$, there exists $\delta>0$ and $\vv$ 
such that $\f(y)\cdot\vv$ is one to one for $|y-x|<\delta$. (locally graph-like)
\end{itemize}
The symbol $\f$ will always denote a member of $D$.

The following bounded operator will be essential 
to the program; it is called the {\it Rigidity 
Operator}:
\begin{gather}
R_\f:H\map C((\R/2\pi)^2)\cr
(R_\f\varphi)(x,y)=(\f(x)-\f(y))\cdot(\varphi(x)-\varphi(y))
\end{gather}

\subsection{Outline of the Program}

Before we state the final result of the program in its full 
generality (Theorem \ref{gen}), it is useful to state the 
following corollary which gives the general idea of the result.

\begin{corollary}\label{cgen}
Let $\f\in D$ not be convex.  Let $V\subset(\R/2\pi)^2$ be 
closed and have the property that for all $(x,y)\in V$, 
the line segment between $\f(x)$ and $\f(y)$ is not competely 
contained in $\f(\R/2\pi)$ (for example, if $\f$ has no 
straight sections, we can take $V=\{(x,y)\in(\R/2\pi)^2:|x-y|>\epsilon\}$).  
Then there exists a $\varphi\in Q_\f$ such that:
\begin{equation}\label{expan}
(\f(x)-\f(y))\cdot(\varphi(x)-\varphi(y))>0\text{ for all $(x,y)\in V$}
\end{equation}
\end{corollary}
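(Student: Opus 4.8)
The plan is to argue by contradiction through a linear-programming duality, mirroring the discrete argument of \cite{cdr}. We may assume $V\neq\emptyset$, else $\varphi=0$ works. Since $V$ is a closed subset of the compact space $(\R/2\pi)^2$ it is compact, and, writing $R_\f\varphi$ also for its restriction to $V$, condition \eqref{expan} says precisely that $R_\f\varphi$ lies in the cone $\mathcal C:=\{g\in C(V):\min_V g>0\}$, which is open, convex, and nonempty. So it suffices to show $R_\f(Q_\f)\cap\mathcal C\neq\emptyset$. Suppose not; then the open convex set $\mathcal C$ is disjoint from the linear subspace $R_\f(Q_\f)$ of $C(V)$, and the Farkas Lemma for Banach spaces (Theorem \ref{farkas}) -- or, for that matter, plain Hahn--Banach separation, which suffices because $\mathcal C$ is open so no closedness of $R_\f(Q_\f)$ is needed -- produces a nonzero continuous functional that vanishes on $R_\f(Q_\f)$ and is nonnegative on $\overline{\mathcal C}=\{g\geq 0\}$, i.e. a nonzero positive Radon measure $\mu$ on $V$ with $\langle\mu,R_\f\varphi\rangle=0$ for all $\varphi\in Q_\f$. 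Regarding $\mu$ as a positive measure on $(\R/2\pi)^2$ supported in $V$, this says $\langle R_\f^*\mu,\varphi\rangle=0$ for all $\varphi\in Q_\f$, that is, $R_\f^*\mu\in Q_\f^{\perp}$.

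Next I would unwind the equilibrium condition $R_\f^*\mu\in Q_\f^{\perp}$. Replacing $\mu$ by its symmetrization under $(x,y)\mapsto(y,x)$ -- which affects neither positivity nor the pairing with the symmetric kernel $R_\f\varphi$ -- and integrating by parts in the first variable, one finds that $R_\f^*\mu\in H$ is, under the Riesz identification, a function $w$ whose derivative is expressed explicitly through the $\R^2$-valued measure $\nu$ obtained by pushing $(\f(x)-\f(y))\,\mu(dx,dy)$ forward along the first projection. The requirement $w\in Q_\f^{\perp}$ then translates into the statement that across every arc of $\f$ the net force exerted by the $\mu$-weighted chords is tangential to $\f$ -- in other words, that $\mu$ is an equilibrium stress for $\f$ with respect to arc-length-preserving variations. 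This is exactly the hypothesis of the continuous Maxwell--Cremona Theorem (Theorem \ref{maxwell}); conditions (2) and (3) in the definition of $D$ enter at this point, guaranteeing that the chord structure of $\f$ is regular enough for that theorem to apply.

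I would then feed $\mu$ into Theorem \ref{maxwell} to realize the equilibrium stress in terms of a real-valued potential $\Phi$ defined on a neighborhood of the closed region $\overline\Omega$ bounded by the simple curve $\f$ -- the continuous analogue of the Maxwell--Cremona lift -- with $\mu\geq 0$ forcing a one-sided convexity condition on $\Phi$ (semiconvexity, up to a global sign, with $\operatorname{Hess}\Phi$ a matrix of measures), and the support condition $\supp\mu\subset V$ -- in particular that no stressed chord lies along $\f(\R/2\pi)$ itself -- pinning down the behavior of $\Phi$ at the boundary. Running the maximum-principle argument of \cite{cdr} in this continuous setting, I expect that no nonzero such $\Phi$ can exist over the region bounded by a \emph{non}convex curve: at a point where $\f$ is locally reflex, the semiconvexity of $\Phi$ together with the boundary constraint yields a contradiction, forcing $\Phi$ to be affine and hence $\mu=0$. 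This contradicts $\mu\neq 0$, so $R_\f(Q_\f)\cap\mathcal C\neq\emptyset$ and the sought $\varphi\in Q_\f$ exists.

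The hard part will be this last step: transporting the Maxwell--Cremona/convex-lift argument of \cite{cdr} to the measure-theoretic setting. Two difficulties stand out. First, $\Phi$ is only semiconvex and $\operatorname{Hess}\Phi$ is a matrix of measures, so the maximum-principle manipulations and the very notion of $\Phi$'s boundary behavior must be set up to survive this low regularity; extracting from Theorem \ref{maxwell} and the support hypothesis on $\mu$ a usable description of $\Phi$ near $\partial\Omega$ is the crux. Second, one must correctly tie the geometric non-convexity of $\f\in D$ to the nonexistence of a compatible nonzero $\Phi$ -- the analogue of selecting a reflex vertex and tracing signs through the lift in \cite{cdr} -- and it is precisely here that the hypothesis on $V$ (no chord contained in $\f(\R/2\pi)$) does its work. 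By comparison I expect the duality step to be routine, since the openness of $\mathcal C$ sidesteps the usual closedness pitfalls of Banach-space Farkas arguments; one only needs to match the exact statement of Theorem \ref{farkas} to the separation used here.
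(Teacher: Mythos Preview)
Your proposal is correct and follows essentially the same route as the paper. The paper obtains Corollary~\ref{cgen} by specializing Theorem~\ref{gen} to the singleton $D_1=\{\f\}$; Theorem~\ref{gen} in turn is proved via exactly your two steps---the Farkas/separation argument is Proposition~\ref{impl} (which for a single curve simplifies, as you note, since no uniformity over $\f$ is needed and openness of $\mathcal C$ suffices), and the ``hard part'' you identify---building the Maxwell--Cremona lift from the equilibrium stress and running a maximum-principle argument to exclude a nonzero such stress on a nonconvex curve---is precisely the content of Theorem~\ref{needmax}.
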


This result includes the result \cite{cdr} of the 
existence of infinitesimal expansions for nonconvex 
polygons.

\begin{corollary}[Theorem 3 of {\cite[p. 215]{cdr}}]
If $\{\p_i\}$ is a nonconvex simple polygon with no 
straight verticies, then there exist $\{\vv_i\}$ satisfying:
\begin{align}
(\p_i-\p_{i+1})\cdot(\vv_i-\vv_{i+1})&=0\\
(\p_i-\p_j)\cdot(\vv_i-\vv_j)&>0\text{ for $i$ and $j$ not adjacent}
\end{align}
\end{corollary}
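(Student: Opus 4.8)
The goal is to derive the polygon statement (Theorem 3 of \cite{cdr}) as a consequence of Corollary \ref{cgen}, so the plan is to realize a nonconvex simple polygon $\{\p_i\}$ as a curve $\f\in D$ and transfer the variation $\varphi$ back to the vertex velocities $\{\vv_i\}$. First I would parameterize the polygon by arc length (or any constant speed on each edge), giving a map $\f:\R/2\pi\map\R^2$ which is piecewise linear with $\f'$ constant and nonzero on the interior of each edge; the breakpoints correspond to the vertices $\p_i$. I would check that $\f\in D$: injectivity is the hypothesis that the polygon is simple, $\f'\ne 0$ a.e.\ holds since $\f'$ is a nonzero constant on each edge, and the locally graph-like condition (3) holds at interior edge points trivially (take $\vv$ along the edge) and at each vertex because a polygonal corner with no straight vertex locally lies on the graph of a function after a suitable rotation---here I would use that the two edges at $\p_i$ are not collinear, so their directions span an angle strictly between $0$ and $\pi$, and choosing $\vv$ not perpendicular to either edge makes $\f(y)\cdot\vv$ strictly monotonic on a neighborhood of the vertex parameter.

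Next I would choose the closed set $V$. Since the polygon has no straight vertices and finitely many edges, for nonadjacent indices $i,j$ the segment from $\p_i$ to $\p_j$ is not contained in $\f(\R/2\pi)$; more carefully, I want a closed $V\subset(\R/2\pi)^2$ that contains all pairs of parameters $(x_i,x_j)$ corresponding to nonadjacent vertices and on which no chord $\overline{\f(x)\f(y)}$ lies inside the curve. I would take $V$ to be a finite union of small closed boxes around the nonadjacent vertex-parameter pairs $(x_i,x_j)$, shrunk enough that the segment condition holds throughout each box; this is possible because the segment condition is open and holds at the vertex pairs themselves (a chord between nonadjacent vertices of a simple polygon with no straight vertices meets the polygon only in a finite set, so in particular is not contained in it). With this $V$, Corollary \ref{cgen} produces $\varphi\in Q_\f$ with $(\f(x)-\f(y))\cdot(\varphi(x)-\varphi(y))>0$ on $V$, and in particular at every nonadjacent vertex pair.

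Finally I would set $\vv_i:=\varphi(x_i)$, the value of $\varphi$ at the parameter of the $i$-th vertex. The strict inequality on $V$ immediately gives $(\p_i-\p_j)\cdot(\vv_i-\vv_j)>0$ for nonadjacent $i,j$. For the equality on adjacent pairs, I would use $\varphi\in Q_\f$, i.e.\ $\varphi'\cdot\f'\equiv 0$ a.e.: on the edge from $\p_i$ to $\p_{i+1}$ the direction $\f'$ is the constant unit (or constant) vector proportional to $\p_{i+1}-\p_i$, so integrating $\varphi'\cdot\f'=0$ across that edge gives $(\p_{i+1}-\p_i)\cdot(\varphi(x_{i+1})-\varphi(x_i))=0$, which is exactly $(\p_i-\p_{i+1})\cdot(\vv_i-\vv_{i+1})=0$.

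The main obstacle is the verification that the polygon parameterization actually lies in $D$, specifically condition (3) at the vertices: one must confirm that a corner with no straight vertex is locally the rotated graph of an $\R\to\R$ function, which comes down to the elementary but essential fact that the two edge directions at a non-straight vertex are linearly independent, so some linear functional is strictly monotonic along the curve near that vertex. A secondary point requiring a little care is arranging the closed set $V$ so that it both contains the finitely many nonadjacent vertex-parameter pairs and satisfies the "chord not contained in the curve" hypothesis everywhere on $V$; once $V$ is taken as a small enough neighborhood of those finitely many pairs this is routine, using that a simple polygon with no straight vertices has no chord between nonadjacent vertices lying inside it.
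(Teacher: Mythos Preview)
Your proposal is correct and follows essentially the same approach as the paper: apply Corollary \ref{cgen} to a parameterization of the polygon, set $\vv_i=\varphi(x_i)$, read off the strict inequality for nonadjacent pairs from the conclusion on $V$, and obtain the adjacent-pair equality by integrating $\varphi'\cdot\f'=0$ across each edge. The only difference is cosmetic: the paper takes $V=\{(x,y):\text{two full edges separate }x\text{ and }y\text{ in both directions}\}$, whereas you use small closed boxes around the nonadjacent vertex-parameter pairs (a finite set of such pairs would also suffice, since finite sets are closed); both choices satisfy the chord hypothesis for the same reason you identify, and your explicit verification that $\f\in D$ at the vertices fills in a step the paper leaves implicit.
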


\begin{proof}
Apply Corollary \ref{cgen} to $\f=\text{the polygon}$ and:
\begin{equation}
\begin{split}
V=\{(x,y)\in(\R/2\pi)^2:\text{ }&\text{there are two full edges 
separating $x$ and $y$}\cr&\text{in both directions}\}
\end{split}
\end{equation}
Then we have a $\varphi$.  Set $\vv_i=\varphi(\f^{-1}(\p_i))$.  
Then $(\p_i-\p_j)\cdot(\vv_i-\vv_j)>0$ for $i$ and $j$ not 
adjacent is clear from (\ref{expan}).  Now:
\begin{equation}
(\vv_{i+1}-\vv_i)\cdot(\p_{i+1}-\p_i)=
\int_{\f^{-1}(\p_i)}^{\f^{-1}(\p_{i+1})}\!\varphi'(x)\cdot(\p_{i+1}-\p_i)\,dx=0
\end{equation}
since $\varphi\in Q_\f$.
\end{proof}

Theorem \ref{gen}, the main result of the generalization of 
the program of \cite{cdr} is essentially Corollary \ref{cgen} 
made uniform over some suitable set of curves.

\begin{theorem}[Analogue of Theorem 3 of {\cite[p. 215]{cdr}}]\label{gen}
Suppose $D_1\subset D$ is (weakly) closed and contains no 
convex curves, and that $V\subset D_1\cross (\R/2\pi)^2$ 
is closed.  Additionally, suppose that for every $(\f,x,y)\in V$, 
the line segment joining $\f(x)$ and $\f(y)$ is not completely 
contained in $\f(\R/2\pi)$.  Then there exists $\epsilon>0$ such 
that for each $\f\in D_1$, there exists $\varphi\in Q_\f$ with:
\begin{itemize}
\item[(1)] $\|\varphi\|=1$.
\item[(2)] $R_\f\varphi(x,y)\geq\epsilon$ whenever $(\f,x,y)\in V$.
\end{itemize}
\end{theorem}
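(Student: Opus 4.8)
The plan is to reduce Theorem~\ref{gen} to the kind of statement that the generalized Farkas Lemma (Theorem~\ref{farkas}) can handle, and then to obtain the \emph{uniform} $\epsilon$ by a compactness argument in the weak topology. First I would fix a single curve $\f\in D_1$ and show, using Corollary~\ref{cgen} together with the Maxwell--Cremona Theorem (Theorem~\ref{maxwell}), that there exists $\varphi_\f\in Q_\f$ with $R_\f\varphi_\f(x,y)>0$ for all $(x,y)$ in the slice $V_\f:=\{(x,y):(\f,x,y)\in V\}$; normalizing gives $\|\varphi_\f\|=1$ and hence, since $V_\f$ is compact and $R_\f\varphi_\f$ is continuous, a curve-dependent $\epsilon_\f>0$ with $R_\f\varphi_\f\ge\epsilon_\f$ on $V_\f$. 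The substance of the existence half is that the alternative --- no such $\varphi$ in the closed subspace $Q_\f$ --- would by the Banach-space Farkas Lemma produce a nonzero nonnegative measure $t\in T$ supported on $V_\f$ with $R_\f^*t$ annihilating $Q_\f$, i.e. lying in $(Q_\f)^\perp$; the Maxwell--Cremona Theorem then interprets $R_\f^*t$ as the equilibrium stress of a self-stressed framework, forcing every segment carrying stress to lie inside $\f(\R/2\pi)$, which contradicts the hypothesis on $V$.

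Next I would upgrade this to a uniform statement over $D_1$. The danger is that $\epsilon_\f$ and $\varphi_\f$ could degenerate as $\f$ varies. So suppose for contradiction that no uniform $\epsilon$ works: there is a sequence $\f_n\in D_1$ such that every unit-norm $\varphi\in Q_{\f_n}$ has $\inf_{(x,y)\in V_{\f_n}}R_{\f_n}\varphi(x,y)\le 1/n$. Since $D_1$ is weakly closed and (being bounded, as all members of $D_1$ have controlled norm --- or after passing to the part of $D_1$ of norm $\le$ some radius; if $D_1$ is unbounded one first observes the theorem is scale-invariant in a suitable sense and reduces to a bounded piece) the Banach--Alaoglu/weak-compactness argument of Lemma~\ref{unif2} gives a subsequence $\f_n\to\f_\infty\in D_1$ weakly, hence uniformly. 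For this limit curve $\f_\infty\in D_1$ we have a genuine $\varphi_\infty\in Q_{\f_\infty}$, $\|\varphi_\infty\|=1$, with $R_{\f_\infty}\varphi_\infty\ge 2\epsilon$ on $V_{\f_\infty}$ for some $\epsilon>0$, by the single-curve result just established.

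The final step is a perturbation/transfer argument: I would produce, for large $n$, a variation $\varphi_n\in Q_{\f_n}$ close to $\varphi_\infty$ that still satisfies $R_{\f_n}\varphi_n\ge\epsilon$ on $V_{\f_n}$, contradicting the defining property of the sequence $\f_n$. The subtlety is that the constraint space $Q_{\f_n}=\{u:u'\cdot\f_n'\equiv 0\}$ moves with $n$, so $\varphi_\infty$ itself need not lie in $Q_{\f_n}$; I would correct it by projecting $\varphi_\infty$ onto $Q_{\f_n}$ in $H$ (or by explicitly subtracting the component of $\varphi_\infty'$ along $\f_n'$) and checking that, because $\f_n'\to\f_\infty'$ in an appropriate sense and $\varphi_\infty'\cdot\f_\infty'\equiv 0$, this correction tends to zero in $H$-norm, hence uniformly by Lemma~\ref{unif2}, so $R_{\f_n}(\varphi_n)\to R_{\f_\infty}(\varphi_\infty)$ uniformly on $(\R/2\pi)^2$; combined with $V$ closed this yields $R_{\f_n}\varphi_n\ge\epsilon$ on $V_{\f_n}$ for $n$ large, after renormalizing $\|\varphi_n\|=1$. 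I expect the main obstacle to be exactly this last transfer step --- controlling how the constraint subspace $Q_\f$ and the operator $R_\f$ depend on $\f$ in the weak topology, and in particular verifying that the projection correction really does vanish in the limit (this is where condition~(2) in the definition of $D$, that $\f'\ne 0$ a.e., and the locally-graph-like condition~(3) are presumably used to keep things nondegenerate); the Farkas/Maxwell--Cremona input for a single curve, while the conceptual heart, should go through essentially as in Corollary~\ref{cgen}.
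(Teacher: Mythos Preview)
Your overall strategy---Theorem~\ref{needmax} (Farkas plus Maxwell--Cremona) for a fixed curve, then a weak-compactness argument over $D_1$ to get the uniform $\epsilon$---matches the paper's. The organization differs: the paper runs the compactness argument on the \emph{dual} side. Rather than assuming the best $\epsilon_{\f_n}\to 0$ and transporting a good variation $\varphi_\infty\in Q_{\f_\infty}$ back to $Q_{\f_n}$, the paper first proves the uniform lower bound $\|J_\f t\|\ge\epsilon\|t\|$ for all $\f\in D_1$ and $t\in T_{V(\f)}$ (here $J_\f=\pi_\f R_\f'$), by taking sequences $\f_n\to\f$, $t_n\to t$ with $\|J_{\f_n}t_n\|\to 0$ and contradicting Theorem~\ref{needmax}. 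Only afterwards, for each fixed $\f$, does it build $\varphi$ via the minimizer construction from the proof of Theorem~\ref{farkas}, which then automatically yields $R_\f\varphi\ge\epsilon$ on $V(\f)$. Working dually buys a cleaner endgame: once the uniform bound on $J_\f$ is in hand, no further transfer of $\varphi$'s between curves is needed.

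Both routes, however, require exactly the transfer step you flag as the main obstacle: approximating a given $q\in Q_{\f_\infty}$ by elements of $Q_{\f_n}$. Your proposed mechanism---orthogonal projection onto $Q_{\f_n}$, or pointwise subtraction of the component of $\varphi_\infty'$ along $\f_n'$---does not obviously work, and the claim that the correction tends to zero in $H$-norm is where the gap lies. You only have $\f_n'\to\f_\infty'$ \emph{weakly} in $L^2$, so there is no reason for $\varphi_\infty'\cdot(\f_n'/|\f_n'|)$ to tend to zero in $L^2$. The paper handles this with a dedicated approximation lemma (Lemma~\ref{approx}): for $q\in Q_{\f_\infty}$ of the special dense form $q'=\lambda\, i\f_\infty'$ with $\lambda$ smooth, one sets $q_n'=(\lambda+\nu_n)\,i\f_n'$ and solves a small $3\times 3$ linear system for a piecewise-constant corrector $\nu_n$ (with $\|\nu_n\|_\infty\to 0$) to restore $\int q_n'=0$; this gives $q_n\to q$ \emph{weakly}, not in $H$-norm. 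Weak convergence suffices because Lemma~\ref{unif2} upgrades it to uniform convergence of $q_n$, hence of $R_{\f_n}q_n$. With Lemma~\ref{approx} substituted for your projection step, your primal-side contradiction argument can be completed; as written, the projection claim is the missing idea.
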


We can see that Corollary \ref{cgen} is obtained by taking $D_1$ 
to consist of a single curve.  A corollary which does not lose 
the uniformity is the following:

\begin{corollary}
Suppose $D_1\subset D$ is weakly closed, contains no convex 
curves, and contains no curves with straight sections.  Then 
for every $\delta>0$, there exists $\epsilon>0$ such that for 
every $\f\in D_1$, there exists $\varphi\in Q_\f$ satisfying:
\begin{itemize}
\item[(1)] $\|\varphi\|=1$.
\item[(2)] $(\varphi(x)-\varphi(y))\cdot(\f(x)-\f(y))\geq\epsilon$ if $|x-y|\geq\delta$.
\end{itemize}
\end{corollary}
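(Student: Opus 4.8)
The plan is to obtain this as an immediate consequence of Theorem~\ref{gen} by feeding it the right set $V$. Given $\delta>0$, I would set
\[
V:=\{(\f,x,y)\in D_1\cross(\R/2\pi)^2:\ |x-y|\geq\delta\},
\]
where $|x-y|$ denotes distance in $\R/2\pi$. Then $V=D_1\cross\{(x,y):|x-y|\geq\delta\}$ is the product of a weakly closed set with a closed set, hence closed, so the first structural hypothesis of Theorem~\ref{gen} holds; the hypotheses that $D_1$ is weakly closed and contains no convex curves are given. The only remaining thing to verify before invoking the theorem is its geometric hypothesis: that for every $(\f,x,y)\in V$ the line segment $S$ joining $\f(x)$ and $\f(y)$ is not entirely contained in $\f(\R/2\pi)$.

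For that I would argue by contradiction, using simplicity of $\f$. Suppose $S\subseteq\f(\R/2\pi)$ for some $(\f,x,y)\in V$. Since $|x-y|\geq\delta>0$ and $\f$ is injective, $\f(x)\ne\f(y)$, so $S$ is a nondegenerate segment. As $\f$ is a continuous injection of the compact space $\R/2\pi$ into the plane, it is a homeomorphism onto its image, so $\f$ restricted to $\f^{-1}(S)$ is a homeomorphism onto $S$ (it is onto because $S\subseteq\f(\R/2\pi)$). Hence $\f^{-1}(S)$ is homeomorphic to $S$, i.e.\ to a closed interval, so it is a closed arc $A\subseteq\R/2\pi$; since $A$ contains the distinct points $x$ and $y$, it is nondegenerate. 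But then $\f$ carries the nondegenerate interval $A$ onto the straight segment $S$, so $\f$ has a straight section, contradicting the hypothesis on $D_1$. This is the only step with any content, and I expect it to be routine; the real difficulty of the statement is already packaged inside Theorem~\ref{gen}.

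With the hypotheses of Theorem~\ref{gen} verified, it yields $\epsilon>0$ (depending on $\delta$ and $D_1$) such that for each $\f\in D_1$ there is $\varphi\in Q_\f$ with $\|\varphi\|=1$ and $R_\f\varphi(x,y)\geq\epsilon$ whenever $(\f,x,y)\in V$. Unwinding the definition of $V$ and of $R_\f$, the latter says precisely $(\varphi(x)-\varphi(y))\cdot(\f(x)-\f(y))\geq\epsilon$ whenever $|x-y|\geq\delta$, which together with $\|\varphi\|=1$ gives conditions (1) and (2). So the corollary follows.
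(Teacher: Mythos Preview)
Your proposal is correct and follows exactly the paper's approach: set $V=D_1\times\{(x,y):|x-y|\geq\delta\}$ and apply Theorem~\ref{gen}. You simply spell out in more detail the verification (left implicit in the paper) that the ``no straight sections'' hypothesis guarantees the segment condition in Theorem~\ref{gen}.
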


\begin{proof}
Choose $V=D_1\cross\{(x,y)\in(\R/2\pi)^2:|x-y|\geq\delta\}$ and apply 
Theorem \ref{gen}.
\end{proof}

The main difficulty in showing the existence of a $\varphi$ 
which is expansive for {\it all} pairs $x$ and $y$ 
is the fact $V$ being closed is critical to the proof.  Clearly 
$(\f,x,x)$ can never be in $V$ since then we would conclude 
that $(\varphi(x)-\varphi(x))\cdot(\f(x)-\f(x))>0$.  Hence, we 
must always exclude a neighborhood of the ``diagonal'' of 
$(\R/2\pi)^2$.  This means that we will not have shown that 
$(\varphi(x)-\varphi(y))\cdot(\f(x)-\f(y))>0$ for all 
pairs $x$ and $y$.

The following theorem is the essence of why expansive variations 
exist.  It relies on the generalization of the Maxwell-Cremona 
Theorem (Theorem \ref{maxwell}).

\begin{theorem}[Analogue of Theorem 4 of {\cite[p. 216]{cdr}}]\label{needmax}
If $\f\in D$ and $t\in T$ such that 
$\langle t,R_\f\alpha\rangle=0$ for all $\alpha\in Q_\f$, then 
either:
\begin{itemize}
\item[(1)] The curve $\f$ is convex.\newline OR
\item[(2)] For all $(x,y)\in\operatorname{supp}t$, the line 
segment connecting $\f(x)$ and $\f(y)$ 
is completely contained in $\f(\R/2\pi)$.
\end{itemize}
\end{theorem}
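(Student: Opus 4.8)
The plan is to reformulate the hypothesis $\langle t, R_\f\alpha\rangle = 0$ for all $\alpha\in Q_\f$ as a statement about the measure $t$ being ``self-stressed'' in an appropriate sense, then invoke the generalized Maxwell--Cremona Theorem (Theorem \ref{maxwell}) to produce a polyhedral lifting, and finally argue that the lifting forces $\f$ to be convex unless the stress is supported only on chords that lie in the curve. Concretely, fix $t\in T$ with $\langle t, R_\f\alpha\rangle = 0$ for all $\alpha\in Q_\f$. The first step is to understand the orthogonal complement: the condition says that the bounded functional $\alpha\mapsto\langle t, R_\f\alpha\rangle$ on $H$, which equals $\langle R_\f^* t, \alpha\rangle$, annihilates the closed subspace $Q_\f = \ker(\alpha\mapsto \alpha'\cdot\f')$. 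Hence $R_\f^* t$ lies in the annihilator of $Q_\f$, which (since $Q_\f$ is the kernel of the bounded map $\alpha\mapsto \alpha'\cdot\f'$ into an appropriate $L^2$-type space) means $R_\f^* t$ is, up to the usual identifications, of the form $\varphi\mapsto \int \lambda(x)\,\f'(x)\cdot\varphi'(x)\,dx$ for some scalar function $\lambda$. Unwinding $R_\f^*$, the pairing $\langle t, R_\f\varphi\rangle = \iint (\f(x)-\f(y))\cdot(\varphi(x)-\varphi(y))\,dt(x,y)$ integrates by parts in $\varphi$ to a statement that the net ``force'' exerted at each point $x$ by the stress $t$ — namely the measure $\int (\f(x)-\f(y))\,dt(x,\cdot)$ pushed to the point $x$ — is balanced along the tangent direction $\f'(x)$, i.e. the tangential component of the total force vanishes, while there may be a normal component accounted for by $\lambda$.

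The second step is to feed this into the continuous Maxwell--Cremona Theorem. In the discrete CDR setting, a stress satisfying the analogous equilibrium condition on the non-tangential part lifts to a piecewise-linear surface over the polygon; here the measure-valued stress $t$ on $(\R/2\pi)^2$ should, by Theorem \ref{maxwell}, correspond to a convex-or-concave ``height function'' whose graph sits over $\f(\R/2\pi)\subset\R^2$, with the mismatch along the tangent direction encoding that the lifting has no crease transverse to the curve except where $t$ is supported. Since $t\geq 0$ (it lies in $T$), the associated stress has a definite sign, so the Maxwell--Cremona lift is a \emph{convex} (or concave) function $z$ defined on the region bounded by $\f$. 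The chords in $\supp t$ are exactly the places where $z$ can fail to be strictly convex, i.e. where the graph of $z$ contains a line segment lying over the chord from $\f(x)$ to $\f(y)$.

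The third step is the geometric dichotomy. If $\supp t$ is empty then $R_\f^*t = 0$ trivially and the hypothesis gives no constraint forcing (2); but in that degenerate case (2) holds vacuously, so assume $\supp t\neq\emptyset$. For $(x,y)\in\supp t$, the convex lift $z$ is affine along the chord $[\f(x),\f(y)]$; since $z$ is convex and affine on that segment, and the boundary values of $z$ on $\f(\R/2\pi)$ are determined, being affine along $[\f(x),\f(y)]$ means the segment must actually be part of the boundary of the domain of $z$ — that is, the chord is contained in $\f(\R/2\pi)$ — \emph{unless} the domain of $z$ is all of the convex hull and $z$ is affine there, which forces $\f$ to bound a convex region, i.e.\ case (1). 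Running this argument requires care about whether the lifting is genuinely a function over a two-dimensional domain or degenerates (the curve could be non-embedded in the sense of winding, though $\f\in D$ is injective); here injectivity of $\f$ and property (3) of $D$ (locally graph-like) are what guarantee the Maxwell--Cremona machinery applies and the ``region bounded by $\f$'' makes sense.

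The main obstacle I anticipate is the second step: transferring the measure-theoretic equilibrium condition on $R_\f^*t$ into a clean statement about a convex lifting via Theorem \ref{maxwell}, particularly keeping track of the tangential slack $\lambda$ and verifying the sign condition $t\geq 0$ really does translate into convexity (as opposed to merely a signed, possibly indefinite, piecewise-linear surface). The discrete argument in \cite{cdr} handles this with combinatorial case analysis on edges and vertices; replacing that with the continuous Maxwell--Cremona statement and an approximation or density argument — so that ``the graph of $z$ is affine over the chord'' is rigorously equivalent to ``the chord lies in $\f(\R/2\pi)$'' — is where the real work lies.
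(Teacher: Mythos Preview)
Your broad architecture---show $R_\f^*t$ is tangential, feed the resulting equilibrium into the continuous Maxwell--Cremona Theorem, then read off the dichotomy geometrically---matches the paper's. Your first step is correct and is exactly what the paper does (producing the $\beta$ such that $\langle t,R_\f\alpha\rangle=\int\beta\,\mathbf{\hat f}'\cdot\alpha'$). However, your second and third steps misidentify what the lifting is and how the dichotomy is actually extracted, and this is a genuine gap, not just a matter of filling in details.

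First, the Maxwell--Cremona output $c$ is \emph{not} a convex function on the region bounded by $\f$. Theorem~\ref{maxwell} produces $c\in C_c(\R^2)$, a compactly supported continuous function on the whole plane, and a compactly supported function cannot be globally convex. What the positivity $t\geq 0$ buys is that $A_1$ (the operator built from $t$) is a nonnegative combination of rank-one projections; since $A_2$ (built from $\beta$) is supported on the curve, we get that $c$ is \emph{locally} convex on $\R^2\setminus\f(\R/2\pi)$, both inside and outside the curve. This is a much weaker and more delicate situation than a single convex cap over the interior.

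Second, the dichotomy is not obtained from ``$z$ is affine along chords in $\supp t$, hence the chord lies in the boundary.'' The actual mechanism is an analysis of the set $S=\{c=M\}$ where $c$ attains its maximum. Local convexity off the curve forces $\partial S\subseteq\f(\R/2\pi)$, leaving four cases for $S$: closure of the exterior, closure of the interior, a closed subset of the curve, or all of $\R^2$. In the first case $c$ vanishes on $\f(\R/2\pi)$, so $\f$ is a level curve of a function convex on the interior, giving conclusion~(1). In the last case $c\equiv 0$, and then any chord in $\supp t$ passing through a point off the curve would make the Hessian of $c$ positive there, contradicting $c\equiv 0$; this gives conclusion~(2). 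The real work you are missing is the elimination of the two intermediate cases, especially the case where $S$ is a proper closed arc of the curve: this requires a local level-curve argument near the endpoints of the arc, and it is precisely here that the ``locally graph-like'' hypothesis (condition~(3) in the definition of $D$) is used. Your sketch never invokes that hypothesis, which is a signal that the argument as written cannot close.
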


In the spirit of the generalization of the Farkas Lemma (Theorem 
\ref{farkas}), it is possible to prove that Theorem \ref{needmax} implies 
Theorem \ref{gen}.

\begin{proposition}[Analogue of Lemma 3 of {\cite[p. 216]{cdr}}]\label{impl}
Theorem \ref{needmax} implies Theorem \ref{gen}.
\end{proposition}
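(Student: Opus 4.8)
The plan is to adapt the duality argument behind Lemma~3 of \cite{cdr}, but to run it with the Banach-space Farkas Lemma (Theorem~\ref{farkas}) over the whole family $D_1$ at once, so that the constant $\epsilon$ comes out uniform. There are two ingredients: a per-curve separation argument, identical in spirit to the proof of Corollary~\ref{cgen}, and a compactness/disintegration step that globalizes it; the generalized Maxwell--Cremona theorem enters only through Theorem~\ref{needmax}, which is used to annihilate the dual objects that Farkas produces.

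For a single $\f\in D_1$ I would argue as follows. Since $V\subset D_1\cross(\R/2\pi)^2$ is closed and $(\R/2\pi)^2$ is compact, the fiber $V_\f:=\{(x,y):(\f,x,y)\in V\}$ is compact. Consider the bounded linear map $\varphi\mapsto R_\f\varphi|_{V_\f}$ from the closed subspace $Q_\f$ into $C(V_\f)$, and ask whether its image meets the open cone of strictly positive functions. By Theorem~\ref{farkas} (in the form of a Hahn--Banach separation of a subspace from an open convex cone, so no closed-range hypothesis is needed) either there is $\varphi\in Q_\f$ with $R_\f\varphi>0$ on $V_\f$ --- in which case compactness of $V_\f$ makes $R_\f\varphi$ bounded below by a positive constant, and dividing by $\|\varphi\|$ gives (1) and (2) of Theorem~\ref{gen} for this $\f$ --- or there is a nonzero $t\ge 0$ in $C(V_\f)^*$ with $\langle t,R_\f\varphi\rangle=0$ for all $\varphi\in Q_\f$. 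Extending $t$ by zero produces an element of $T$ with $\emptyset\ne\supp t\subseteq V_\f$, so Theorem~\ref{needmax} applies: either $\f$ is convex, impossible since $D_1$ contains no convex curves, or every $(x,y)\in\supp t$ has the segment between $\f(x)$ and $\f(y)$ contained in $\f(\R/2\pi)$, impossible by the standing hypothesis on $V$ since $\supp t\subseteq V_\f$. So the first alternative always holds; let $\epsilon(\f)>0$ be the resulting constant.

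To obtain a uniform $\epsilon$ I would run the same dichotomy once, globally. First assume $D_1$ is bounded (hence a weakly compact metrizable space, being bounded and weakly closed in the reflexive separable space $H$); for unbounded $D_1$ one checks separately that $\epsilon(\f)\to\infty$ as $\|\f\|\to\infty$ and restricts. Then $V$ is compact. The primal objects are ``variation fields'': weakly continuous maps $\f\mapsto\varphi_\f$ with $\varphi_\f\in Q_\f$ and $\sup_\f\|\varphi_\f\|\le 1$. It is convenient here to reparametrize a variation by the scalar $\lambda$ with $\varphi_\f'=\lambda\,(J\f')$, $J$ the rotation by $\pi/2$ (using $\f'\ne0$ a.e.), since this replaces the $\f$-dependent constraint $\varphi'\cdot\f'\equiv0$ by the requirement $\int\lambda\,J\f'=0$, which is only two linear conditions and is automatically met by constants. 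By Lemma~\ref{unif2} the map sending a field to $[(\f,x,y)\mapsto R_\f\varphi_\f(x,y)]$ is continuous into $C(V)$, and one applies Theorem~\ref{farkas} to it. If no uniform $\epsilon$ exists we obtain $0\ne t\ge 0$ on $V$ annihilating the image; pushing $t$ forward to a measure $\mu$ on $D_1$ and disintegrating, $t=\int_{D_1}t_\f\,d\mu$ with $\supp t_\f\subseteq V_\f$, and since the fields may be localized near a single curve we get $\langle t_\f,R_\f\alpha\rangle=0$ for $\mu$-a.e.\ $\f$ and all $\alpha\in Q_\f$. Applying Theorem~\ref{needmax} fiberwise, exactly as in the per-curve step, forces $\supp t_\f=\emptyset$ and hence $t_\f=0$ for $\mu$-a.e.\ $\f$, so $t=0$, a contradiction; hence a uniform $\epsilon$ exists.

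The step I expect to be the main obstacle is controlling the family $\{Q_\f\}$ under weak convergence. The relation $u'\cdot\f'\equiv0$ is bilinear in $(u',\f')$, and this pairing is not weakly continuous on $\Ltwo\cross\Ltwo$, so $Q_{\f_n}$ need not converge to $Q_{\f_\infty}$ when $\f_n\rightharpoonup\f_\infty$; consequently a continuous variation field through a prescribed pair $(\f_0,\alpha)$ need not exist, and a good variation for $\f_\infty$ cannot be transferred naively to nearby $\f_n$. This is precisely why the argument must be organized on the dual side, where one only needs the supports of the measures to behave well (which follows from $V$ closed and $\f_n\to\f_\infty$ uniformly), and why the $\lambda$-reparametrization --- which pushes almost all of the $\f$-dependence into the two-dimensional closure constraint --- is the natural device. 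Making the disintegration and the localization of fields rigorous, and handling curves whose speed is not bounded away from zero (condition (2) in the definition of $D$ being only qualitative), are the points that will demand the most care.
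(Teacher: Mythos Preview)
Your per-curve argument is correct. The global argument has a gap precisely where you flag it. To run Theorem~\ref{farkas} once over all of $V$ you need a genuine Hilbert space $Y$ of variation fields, and the set of weakly continuous sections $\f\mapsto\varphi_\f\in Q_\f$ with bounded sup-norm is not one. More to the point, after disintegrating a dual $t=\int t_\f\,d\mu$, the step ``since the fields may be localized near a single curve we get $\langle t_\f,R_\f\alpha\rangle=0$'' demands, for each $\f_0$ and each $\alpha\in Q_{\f_0}$, a variation field passing through $(\f_0,\alpha)$: that is exactly the approximation problem you hoped to avoid. The $\lambda$-reparametrization alone does not furnish it, because freezing $\lambda$ and varying $\f$ destroys the closing constraint $\int\lambda\,J\f'=0$. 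The paper meets this head-on with Lemma~\ref{approx}, which perturbs $\lambda$ by a small step function (three jumps, coefficients solved for by Cramer's rule) to restore the two closing conditions and thereby produces $q_n\in Q_{\f_n}$ converging weakly to any given smooth $q\in Q_\f$.

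With Lemma~\ref{approx} in hand, the paper's route is more economical than your disintegration scheme and is already ``organized on the dual side'' in the sense you advocate. It never builds a global primal space. Instead it proves directly the uniform dual bound $\|\pi_\f R_\f'\,t\|\ge\epsilon\|t\|$ for all $\f\in D_1$ and all positive $t$ supported in $V(\f)$, by a compactness contradiction: if $\f_n\in D_1$, $t_n\in T_{V(\f_n)}$, $\|t_n\|=1$, and $\|\pi_{\f_n}R_{\f_n}'t_n\|\to 0$, pass to weak/weak-$*$ limits $\f,t$, invoke Lemma~\ref{approx} to test the limit against a dense set of $q\in Q_\f$, and contradict Theorem~\ref{needmax}. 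Then, for each fixed $\f$, the construction of $\varphi$ repeats verbatim the minimization argument from the proof of Theorem~\ref{farkas} with $J_\f:=\pi_\f R_\f'$ in place of $A'$; the uniform lower bound on $\|J_\f t\|$ is what makes $\epsilon$ uniform. Your aside that $\epsilon(\f)\to\infty$ as $\|\f\|\to\infty$ is also unsubstantiated, though in fairness the paper's argument invokes Banach--Alaoglu on $D_1$ and thus tacitly uses boundedness as well.
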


\subsection{Proof of Theorem \ref{needmax}}

\begin{proof}
Suppose that we have some $\f\in D$ and $t\in T$ 
with $\langle t,R_\f\alpha\rangle=0$ for all $\alpha\in Q_\f$.  
Let $\mathbf{\hat f}'$ denote $\f'/|\f'|$.

First, let us show that there exists 
$\beta\in\Ltwo(\R/2\pi)$ such that:
\begin{equation}
\langle t,R\alpha\rangle=\int_{\R/2\pi}\beta(x)\mathbf{\hat f}'(x)\cdot\alpha'(x)\,dx
\end{equation}
Clearly there exists $\mu\in\Ltwo(\R/2\pi,\R^2)$ 
such that 
$\langle t,R\alpha\rangle=\int_{\R/2\pi}\mu(x)\cdot\alpha'(x)\,dx$.  
Now we can constrain $\mu$ as follows.  For any 
$\lambda\in\Ltwo(\R/2\pi)$ satisfying 
$\int\lambda\mathbf{\hat f}'=0$, we know that:
\begin{equation}
\int_{\R/2\pi}\left(\mu(x)\cdot i\mathbf{\hat f}'(x)\right)\lambda(x)\,dx=0
\end{equation}
The set $H$ of such $\lambda$ is of codimension $2$ in 
$\Ltwo(\R/2\pi)$.  Now 
$\mu(x)\cdot i\mathbf{\hat f}'(x)\in H^\perp$, 
which is of dimension $2$.  But we can exercise 
two dimensions of freedom by 
adding constants to $\mu(x)$.  Thus we can assume 
$\mu(x)\cdot i\mathbf{\hat f}'(x)\equiv 0$, 
in other words $\mu\parallel\f'$, and hence is 
of the form $\beta(x)\mathbf{\hat f}'(x)$.

We will consider the operators $A_1,A_2\in\Lin(C_0(\R^2,\R^2),\R^2)$ 
defined by:
\begin{align}
A_1\U&:=\iint_{(\R/2\pi)^2}t(x,y)(\f(x)-\f(y))
\int_{\f(y)}^{\f(x)}\U\cdot\dd s\\
A_2\U&:=\int_{\R/2\pi}\beta(x)\mathbf{\hat f}'(x)[\U(\f(x))\cdot\f'(x)]\,dx
\end{align}
Since $A_1$ and $A_2$ are linear combinations of projections, they 
are symmetric, that is there exist $a_j,b_j,e_j\in\Lin(C_0(\R^2,\R),\R)=C_0(\R^2)^*$ 
such that $A_j=\left(\smallmatrix a_j&b_j\cr b_j&e_j\endsmallmatrix\right)$.  
Then $A:=A_1-A_2=\left(\smallmatrix a&b\cr b&e\endsmallmatrix\right)$, 
where $a,b,e\in\Lin(C_0(\R^2,\R),\R)=C_0(\R^2)^*$.  We have:
\begin{equation}
\begin{split}
A_1\grad g&=\iint_{(\R/2\pi)^2}t(x,y)(\f(x)-\f(y))(g(\f(x))-g(\f(y)))\cr
&=\Bigl(\langle t,R(\e_1g(\f(\cdot)))\rangle,\langle t,R(\e_2g(\f(\cdot)))\rangle\Bigr)\cr
&=\int_{\R/2\pi}\beta(x)\mathbf{\hat f}'(x)[\grad g(\f(x))\cdot\f'(x)]\,dx=A_2\grad g
\end{split}
\end{equation}
Hence $A\grad g=0$ for all $g\in C_0^\infty(\R^2)$.

By the generalization of the Maxwell-Cremona Theorem, 
Theorem \ref{maxwell}, there exists a $c\in C_c(\R^2)$ 
such that we have (in the distributional sense):
\begin{equation}
A\U=\iint_{\R^2}\left(\begin{matrix}\hfill c_{yy}&-c_{xy}\cr-c_{xy}&\hfill c_{xx}\end{matrix}\right)\U\,dx\,dy
\end{equation}
Now the matrices $\left(\begin{smallmatrix}\hfill c_{yy}&-c_{xy}\cr
-c_{xy}&\hfill c_{xx}\end{smallmatrix}\right)$ and 
$\left(\begin{smallmatrix}c_{xx}&c_{xy}\cr
c_{xy}&c_{yy}\end{smallmatrix}\right)$ are 
related by a similarity transform.  The former is 
a positive linear combination of projections at 
every point in $\R^2-\f(\R/2\pi)$, hence the latter 
is positive at every point not on the curve as well.  Hence 
$c$ is locally convex on the interior of the curve 
and on the exterior of the curve.

Now let $M=\sup_{\p\in\R^2}c(\p)$ and define the nonempty 
closed set $S=\{\p\in\R^2:c(\p)=M\}$.  
Suppose $\p\in\partial S$ and $\p\notin\f(\R/2\pi)$.  
Then there is a neighborhood of $\p$ which is disjoint 
from $\f(\R/2\pi)$.  In this neighborhood, $c$ will 
be convex.  Hence the whole neighborhood will belong 
to $S$, a contradiction.  Thus 
$\partial S\subseteq\f(\R/2\pi)$.  We thus have four 
cases:
\begin{itemize}
\item[(1)] $S$ is the closure of the exterior of the curve.
\item[(2)] $S$ is the closure of the interior of the curve.
\item[(3)] $S$ is a closed subset of the curve.
\item[(4)] $S$ is the whole plane.
\end{itemize}
If (1) is true, then $c$ is zero on the curve.  This 
implies that $\f$ is a level curve of a function with positive 
hessian and as such must be convex.  If (4) is true, 
then $c\equiv 0$.  Then for every $(x,y)\in\operatorname{supp}t$, 
we will necessarily have the line segment joining 
$\f(x)$ and $\f(y)$ completely contained in $\f(\R/2\pi)$.  
This is because if not, then there would be a point 
in $\R^2-\f(\R/2\pi)$ where the matrix
$\left(\begin{smallmatrix}\hfill c_{yy}&-c_{xy}\cr
-c_{xy}&\hfill c_{xx}\end{smallmatrix}\right)$ would be 
positive, giving $c$ upward convexity.  The case (2) is 
easily disposed of since $c=0$ outside the convex hull 
of the curve and hence will be zero on 
at least one point of the curve.  Hence the maximum 
value $c$ attains is zero, a contradiction.  Thus it 
suffices to show that case (3) cannot happen.

Assume (3) is true.  We have two cases:
\begin{itemize}
\item[(1$'$)] There exists $x\in\R/2\pi$ such that for every 
$\delta>0$, $\f([x,x+\delta])\nsubseteq S$ and $\f([x-\delta,x])\nsubseteq S$.
\item[(2$'$)] There does not exist such an $x\in\R/2\pi$.
\end{itemize}
I will deal with the easier case (1$'$) first.  WLOG $x=0$.  
Also, WLOG, $\f(x)\cdot\e_1$ is one to one for $|x|<\epsilon$.  
Choose $\delta_1,\delta_2>0$ such that the curve in the square 
$[-\delta_1,\delta_1]\cross[-\delta_2,\delta_2]\subset\R^2$ looks like the graph of 
a function, that is, $\f^{-1}([-\delta_1,\delta_1]\cross[-\delta_2,\delta_2])\subseteq[-\epsilon,\epsilon]$.  
Let $-\delta_1<x_-<0<x_+<\delta_1$ have $c(x_-,0)\ne M$ and 
$c(x_+,0)\ne M$.
Now let:
\begin{equation}
M'=\frac 12\left(M+\max_{\p\in\partial[x_-,x_+]\cross[-\delta_2,\delta_2]}c(\p)\right)<M
\end{equation}
Let $y_+$ be the least $y>0$ such that $c(0,y)=M'$ and let 
$y_-$ be the highest $y<0$ such that $c(0,y)=M'$.  Consider 
the level curves passing through $y_+$ and $y_-$.  By the 
convexity of $c$ they must curve away from $(0,0)$ where 
the maximum occurs, but they must meet the curve on both 
sides of $(0,0)$ at some $x_-'$ and $x_+'$.  This is a 
contradiction.

Now suppose (2$'$) is true.  Let $[x,y]\subset\R/2\pi$ 
satisfy $c(\f([x,y]))=M$ and for every $\delta>0$, 
$\f([x-\delta,x])\nsubseteq S$ and $\f([y,y+\delta])\nsubseteq S$.  
Then $\f([x,y])$ is a level curve of $c$ restricted to 
the interior of the curve.  As the level curve of a 
convex function it must be curved towards the interior 
of the curve.  But by the same reasoning, $\f([x,y])$ 
is a level curve of $c$ restricted to the outside of 
the curve, and hence must be curved towards the outside 
of the curve.  Hence $\f([x,y])$ is a line segment.  
As above, we can rotate $\f$ so it looks like the graph 
of a function $\R\map\R$ near $\f(x)$ and near $\f(y)$.  Using 
the same procedure as above, we get a contradiction 
by considering level curves of $M-\eta$ for a suitably small $\eta>0$.
\end{proof}

We have now justified every step in the proof of Theorem 
\ref{gen} except for Proposition \ref{impl} and the generalized 
Maxwell-Cremona Theorem.  We will prove these next.

\section{A Generalization of the Farkas Lemma}\label{farkass}

The Farkas Lemma from linear programming is as follows:

\begin{lemma}[Farkas Lemma]\label{farkasl}
Let $A:\R^n\to\R^m$ be a linear transformation.  Then exactly 
one of the following two statements holds:
\begin{itemize}
\item[(1)] There exists a nonzero $y\in\R^m$ whose components 
are all nonnegative and which satisfies $A^{\operatorname{T}}y=0$.
\item[(2)] There exists an $x\in\R^n$ such that every component 
of $Ax$ is positive.
\end{itemize}
\end{lemma}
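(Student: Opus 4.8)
The plan is to read Lemma~\ref{farkasl} as a theorem of the alternative and prove it by a separating-hyperplane argument, which is the natural finite-dimensional shadow of the Hahn--Banach machinery that will be needed for the Banach-space version in Section~\ref{farkass}.

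First I would dispatch the easy half: the two alternatives cannot hold simultaneously. If $y$ is as in (1) and $x$ is as in (2), then $0=\langle A^{\operatorname{T}}y,x\rangle=\langle y,Ax\rangle$; but $y$ is nonzero with nonnegative entries and every entry of $Ax$ is strictly positive, so $\langle y,Ax\rangle>0$, a contradiction. Hence at most one of (1), (2) holds.

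For the substantive half, I assume (2) fails and produce a $y$ as in (1). The failure of (2) says precisely that the linear subspace $L:=\operatorname{Im}(A)\subseteq\R^m$ is disjoint from the open positive orthant $P:=\{w\in\R^m:w_i>0\text{ for all }i\}$. Both $L$ and $P$ are nonempty convex sets and $P$ is open, so by the separating hyperplane theorem there exist a nonzero $y\in\R^m$ and a scalar $c$ with $\langle y,\ell\rangle\le c\le\langle y,w\rangle$ for all $\ell\in L$ and $w\in P$ (replacing $y$ by $-y$ if the inequalities come out in the other order). Since $L$ is a subspace, the bound $\langle y,\ell\rangle\le c$ for all $\ell\in L$ forces $\langle y,\ell\rangle=0$ for every $\ell\in L$ (otherwise scaling $\ell$ violates the bound), so $\langle y,Ax\rangle=0$ for all $x$, i.e. $A^{\operatorname{T}}y=0$, and in particular $c\ge 0$. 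To see $y\ge 0$, fix a coordinate $i$ and $t>0$, apply the right-hand inequality to $w=t\e_i+s(\e_1+\cdots+\e_m)\in P$, and let $s\downarrow 0$ to get $ty_i\ge 0$, hence $y_i\ge 0$. Thus $y$ is nonzero, has nonnegative components, and satisfies $A^{\operatorname{T}}y=0$, so alternative (1) holds.

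The only delicate point is the separation step: one is separating a linear subspace from a disjoint \emph{open} convex set, which is legitimate in finite dimensions precisely because one of the two sets is open (no closedness of $\operatorname{Im}(A)$ is needed). I expect this to be the main conceptual obstacle, and it is also exactly the ingredient whose infinite-dimensional analogue will have to be supplied in Section~\ref{farkass}; the mutual-exclusivity check and the extraction of $A^{\operatorname{T}}y=0$ and $y\ge 0$ are short and routine.
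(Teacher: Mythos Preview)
Your argument is correct: the mutual-exclusivity half is immediate, and the separation argument from the open positive orthant is a clean and standard route to (1) when (2) fails. One minor remark: when you write ``let $s\downarrow 0$ to get $ty_i\ge 0$'', you are implicitly using the $c\ge 0$ you established in the previous sentence, since the limit actually gives $ty_i\ge c$; it would be worth making that explicit.

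As for comparison with the paper: the paper does not give a separate proof of Lemma~\ref{farkasl} at all --- it simply states it as the classical result, proves the Banach-space version (Theorem~\ref{farkas}), and then remarks that Lemma~\ref{farkasl} is the special case $Y=\R^n$, $X$ finite. So the relevant comparison is between your approach and the proof of Theorem~\ref{farkas}, and there the methods genuinely diverge. You anticipate a Hahn--Banach/separation argument in Section~\ref{farkass}, but the paper does \emph{not} proceed that way: instead it runs a variational argument. Assuming $\sim$(1), it first uses Banach--Alaoglu to show $\|A't\|\ge\epsilon\|t\|$ for all positive $t$, then takes a minimizer $t_\infty$ of $\|A't\|$ over $\{t\in T:\|t\|=1\}$, sets $y=A't_\infty/\|A't_\infty\|$, and checks by a first-variation computation that $(Ay)(x)\ge\epsilon$ for all $x$. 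Your separating-hyperplane proof is shorter and more conceptual in finite dimensions, and it makes the role of convexity transparent; the paper's variational proof is more constructive (it produces an explicit $y$ with a quantitative lower bound) and is tailored to extend to the setting of Proposition~\ref{impl}, where one needs uniformity of the bound $\epsilon$ over a family of operators.
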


The generalization of the Farkas Lemma that we 
will need will have the basic form:

\begin{theorem}\label{farkas}
Let $X$ be a compact Hausdorff space and $Y$ a (real) Hilbert space.  
Let $A:Y\map C(X)$ be linear and bounded.  Also let $A':C(X)^*\map Y$ 
denote its adjoint, that is 
$\langle\lambda,Ay\rangle=\langle A'\lambda,y\rangle$.  Then 
exactly one of the following two statements holds:
\begin{itemize}
\item[(1)] There exists a nonzero positive $t\in C(X)^*$ such that $A't=0$.
\item[(2)] There exists a $y\in Y$ such that $Ay>0$.
\end{itemize}
\end{theorem}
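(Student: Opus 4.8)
The plan is to mimic the classical proof of Lemma \ref{farkasl}: the image $A(Y)$ is a linear subspace of $C(X)$, the strictly positive functions form an open convex cone, statement (2) asks exactly whether these two sets meet, and if they do not meet the geometric Hahn--Banach theorem produces the functional required by statement (1).

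First I would check that (1) and (2) cannot hold simultaneously. If $t\in C(X)^*$ is nonzero and positive with $A't=0$, and $y\in Y$ satisfies $Ay>0$, then, $X$ being compact, $c:=\min_{x\in X}(Ay)(x)>0$ and $Ay\geq c\mathbf{1}$ pointwise. A nonzero positive functional on $C(X)$ must satisfy $\langle t,\mathbf{1}\rangle>0$: otherwise $-\mathbf{1}\leq f\leq\mathbf{1}$ would force $\langle t,f\rangle=0$ for every $f$ with $\|f\|\leq1$, so $t=0$. Hence $0=\langle A't,y\rangle=\langle t,Ay\rangle\geq c\langle t,\mathbf{1}\rangle>0$, a contradiction.

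Next, assuming (2) fails, I would produce the functional of (1). Put $P:=\{f\in C(X):f(x)>0\text{ for all }x\in X\}$. Since $X$ is compact, $P=\{f\in C(X):\min_Xf>0\}$, so $P$ is open; it is convex and contains $\mathbf{1}$, hence nonempty. The failure of (2) is exactly the statement $P\cap A(Y)=\emptyset$. By the geometric Hahn--Banach theorem, separating the open convex set $P$ from the disjoint convex set $A(Y)$, there are a nonzero $t\in C(X)^*$ and $\gamma\in\R$ with $\langle t,f\rangle<\gamma\leq\langle t,Ay\rangle$ for all $f\in P$ and all $y\in Y$. Because $A(Y)$ is a linear subspace, the functional $y\mapsto\langle t,Ay\rangle$ is bounded below, hence identically zero (a linear functional on a vector space that is bounded below must vanish); thus $A't=0$, and taking $y=0$ gives $\gamma\leq0$. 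Replace $t$ by $-t$: it is still nonzero and still annihilated by $A'$, and now $\langle t,f\rangle>-\gamma\geq0$ for every $f\in P$. Finally, any $f\geq0$ in $C(X)$ is the sup-norm limit of $f+\frac1n\mathbf{1}\in P$, so $\langle t,f\rangle\geq0$ for all $f\geq0$; that is, $t$ is positive. This $t$ establishes (1), completing the dichotomy.

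I do not expect a serious obstacle: the argument is essentially a single application of Hahn--Banach. The points that do need care are (a) one must separate the \emph{open} cone $P$ from the (possibly non-closed) subspace $A(Y)$, in that order, so that the separating functional is genuinely nonzero and the inequality on the $P$-side is strict --- the strictness is what yields positivity of $t$ after the sign change and excludes the degenerate functional; and (b) compactness of $X$ is used twice, to make $P$ open and nonempty and to ensure $\langle t,\mathbf{1}\rangle>0$ for a nonzero positive $t$. The hypothesis that $Y$ is a Hilbert space plays no role in this statement --- any real topological vector space would do --- but it is the setting in which Theorem \ref{farkas} is later applied.
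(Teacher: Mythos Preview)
Your proof is correct, but it proceeds by a genuinely different route from the paper's. You prove $\neg(2)\Rightarrow(1)$ via a single application of the geometric Hahn--Banach theorem, separating the open cone $P$ of strictly positive functions from the subspace $A(Y)$. The paper instead proves $\neg(1)\Rightarrow(2)$ by a variational argument: assuming $A't\ne 0$ for all nonzero positive $t$, it first uses Banach--Alaoglu to show a uniform lower bound $\|A't\|\geq\epsilon\|t\|$ on the positive cone, then minimizes $\|A't\|$ over $\{t\in T:\|t\|=1\}$, extracts a weak-$*$ limit $t_\infty$ attaining the infimum $w$, and sets $y=A't_\infty/\|A't_\infty\|$; a first-variation computation then shows $(Ay)(x)\geq\epsilon$ for every $x$.

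Your approach is shorter and, as you note, does not use the Hilbert structure on $Y$ at all. The paper's approach, however, is not chosen for elegance but for portability: the same minimization-plus-first-variation skeleton is reused verbatim in the proof of Proposition~\ref{impl}, where the operator varies over a family $\{J_\f:\f\in D_1\}$ and one needs a \emph{single} $\epsilon>0$ working uniformly across the family. The Hilbert structure is essential there, both to make sense of $\|A't\|$ as a norm in $Y$ and to carry out the derivative computation. A Hahn--Banach separation in each fibre would give a $t_\f$ (or $\varphi_\f$) depending on $\f$ with no a priori uniform control, so your argument, while perfectly adequate for Theorem~\ref{farkas} as stated, would not transfer directly to the application that motivates it.
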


We remark that if we take $Y$ to be finite dimensional and $X$ to consist 
of a finite number of points, then we recover Lemma \ref{farkasl}.

\begin{proof}
It is trivial that (1) and (2) cannot simultaneously hold, 
for if so, $0=\langle A't,y\rangle=\langle t,Ay\rangle>0$.

It remains to show that $\sim$(1)$\implies$(2).  Let 
$T:=\{t\in C(X)^*:t\geq 0\}$.

I claim that there exists $\epsilon>0$ such that 
$\|A't\|\geq\epsilon\|t\|$ for all $t\in T$.  If we 
suppose the contrary, then there exists a sequence $t_n\in T$ with 
$\|t_n\|=1$ such that $A't_n\to 0$.  By the Banach-Alaoglu 
Theorem, there exists a subnet $t_\alpha$ which converges to 
$t\in T$ (in the weak-$*$ topology on $T$).  We know that we will 
have $t\in T$ and $\|t\|=1$.  Also, for all $y\in Y$, we have:
\begin{equation}
0=\lim_\alpha\langle A't_\alpha,y\rangle=
\lim_\alpha\langle t_\alpha,Ay\rangle=\langle t,Ay\rangle=\langle A't,y\rangle
\end{equation}
Thus $A't=0$, contradicting $\sim$(1).  Thus the claim is 
true.  I now can show (2).

Let $t_n\in T$ be a sequence such that $\|t_n\|=1$ and:
\begin{equation}
\|A't_n\|\to\inf_{\begin{smallmatrix}t\in T\cr\|t\|=1\end{smallmatrix}}
\|A't\|=:w\geq\epsilon
\end{equation}
Then a subnet $t_\alpha$ will converge in the weak-$*$ 
topology to a limit $t_{\infty}$.  Now:
\begin{equation}
w\leq\|A't_{\infty}\|\leq\liminf_\alpha\|A't_\alpha\|=w
\end{equation}
Hence $\|A't_{\infty}\|=w$.

Let $y:=A't_{\infty}/\|A't_{\infty}\|$.  I claim that 
$(Ay)(x)\geq\epsilon$ for all $x\in X$.  It suffices to 
show that $\langle t,Ay\rangle\geq w$ for all $t\in T$ 
with $\|t\|=1$.  But if $\langle t,Ay\rangle<w$ for some 
$t\in T$ with $\|t\|=1$, then 
$\langle A't,y\rangle<w$.  Consider then:
\begin{equation}
\begin{split}
\left.\frac d{d\eta}\right|_{\eta=0}
&\|A'((1-\eta)t_{\infty}+\eta t)\|^2\cr
&=\left.\frac d{d\eta}
\left[(1-\eta)^2\|A't_{\infty}\|^2+2\eta(1-\eta)\langle A't,A't_{\infty}\rangle
+\eta^2 \|A't\|^2\right]\right|_{\eta=0}\cr
&=-2w^2+2\langle A't,wy\rangle<0
\end{split}
\end{equation}
This is a contradiction since $\|(1-\eta)t_{\infty}+\eta t\|=1$.  
Hence the proof is complete.
\end{proof}

We can prove Proposition \ref{impl} using the same proof outline 
from Theorem \ref{farkas}.  We will, however, need the following 
approximation lemma.

\begin{lemma}\label{approx}
Suppose $\f_n\to\f$ in $D$ and that $q\in Q_\f$ is of the form 
$q'=\lambda i\f'$ where $\lambda$ is smooth.  Then there exist 
$q_n\in Q_{\f_n}$ such that $q_n\to q$ (weakly).
\end{lemma}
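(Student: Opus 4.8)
The plan is to repair the obvious candidate. Since $q'=\lambda\,i\f'$, the naive choice is $\tilde q_n'=\lambda\,i\f_n'$: because $(i\f_n')\cdot\f_n'\equiv0$ this already satisfies the constraint $\tilde q_n'\cdot\f_n'\equiv0$ defining $Q_{\f_n}$, and $\tilde q_n(s):=\int_0^s\tilde q_n'$ has $\tilde q_n(0)=0$ with $\tilde q_n'\in\Ltwo$ (using $\lambda\in\Linfty$ and $\f_n'\in\Ltwo$). The only way $\tilde q_n$ can fail to lie in $H$ is periodicity: the defect $d_n:=\int_0^{2\pi}\lambda\,i\f_n'\in\R^2$ need not vanish, although $\int_0^{2\pi}\lambda\,i\f'=q(2\pi)-q(0)=0$. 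I would kill the defect by adding $c_n\,i\f_n'$ for a suitably chosen bounded scalar function $c_n$; staying in the $i\f_n'$ direction keeps the correction inside $Q_{\f_n}$, and if $\|c_n\|_\infty\to0$ it will not disturb weak convergence.

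First I would extract from $\f_n\to\f$ in $D$ that $\f_n'\to\f'$ weakly in $\Ltwo(\R/2\pi,\R^2)$ and that $M:=\sup_n\|\f_n'\|_{\Ltwo}<\infty$. Boundedness is automatic for weakly convergent sequences. For the convergence, note that the $H$-inner product of $\f_n$ with $v$ is $\int\f_n'\cdot v'$, so weak $H$-convergence gives $\langle\f_n',w\rangle_{\Ltwo}\to\langle\f',w\rangle_{\Ltwo}$ for every mean-zero $w\in\Ltwo$, while against a constant $w$ one has $\langle\f_n',w\rangle_{\Ltwo}=w\cdot(\f_n(2\pi)-\f_n(0))=0$; decomposing an arbitrary $w$ into mean plus mean-zero part finishes it. In particular $d_n\to\int_0^{2\pi}\lambda\,i\f'=0$.

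Next I would build the correction uniformly. Because $\f$ is a simple closed curve its image is not contained in any line (a circle does not embed in $\R$), so the bounded linear map $\Ltwo(\R/2\pi)\to\R^2$, $\phi\mapsto\int_0^{2\pi}\phi\,i\f'$, is onto: were its range contained in $\{x\cdot w=0\}$ we would have $w\cdot i\f'\equiv0$, forcing $\f'$ parallel to a fixed line. Choose smooth scalars $\phi_1,\phi_2$ with $a_j:=\int_0^{2\pi}\phi_j\,i\f'$ a basis of $\R^2$, and set $a_j^{(n)}:=\int_0^{2\pi}\phi_j\,i\f_n'$. By the weak convergence of $\f_n'$ we have $a_j^{(n)}\to a_j$, so the matrix $G_n$ with columns $a_1^{(n)},a_2^{(n)}$ converges to the invertible matrix $G$ with columns $a_1,a_2$; hence for $n$ large $G_n$ is invertible with $G_n^{-1}\to G^{-1}$, and solving $\alpha_n a_1^{(n)}+\beta_n a_2^{(n)}=-d_n$ together with $d_n\to0$ gives $(\alpha_n,\beta_n)\to0$. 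Put $c_n:=\alpha_n\phi_1+\beta_n\phi_2$, a smooth function with $\|c_n\|_\infty\to0$ and $\int_0^{2\pi}c_n\,i\f_n'=-d_n$.

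Finally, $q_n(s):=\int_0^s(\lambda+c_n)\,i\f_n'$ works: $q_n(0)=0$ and $q_n(2\pi)=d_n+\int_0^{2\pi}c_n\,i\f_n'=0$, so $q_n\in H$; $q_n'\cdot\f_n'=(\lambda+c_n)(i\f_n')\cdot\f_n'\equiv0$, so $q_n\in Q_{\f_n}$. For the convergence, write $(q_n-q)'=\lambda\,i(\f_n'-\f')+c_n\,i\f_n'$: the first term tends to $0$ weakly in $\Ltwo$, since multiplication by the bounded matrix-valued function $\lambda i$ preserves weak $\Ltwo$-convergence, and the second tends to $0$ in $\Ltwo$-norm because $\|c_n\,i\f_n'\|_{\Ltwo}\le\|c_n\|_\infty M\to0$; hence $\langle q_n-q,v\rangle_H=\int(q_n-q)'\cdot v'\to0$ for every $v\in H$, which is exactly $q_n\to q$ weakly. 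I expect the correction step to be the real obstacle: one must both realize that a correction in the $i\f_n'$ direction suffices --- which works precisely because such corrections already surject onto $\R^2$ under integration against the two fixed functions $\phi_1,\phi_2$ --- and that the matrices $G_n$ stay uniformly invertible, which is where non-degeneracy of the limit $\f$ (it is an embedded circle, not a segment) is used. Everything else is bookkeeping.
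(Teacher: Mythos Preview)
Your argument is correct and follows the same strategy as the paper: take the ansatz $q_n'=(\lambda+c_n)\,i\f_n'$ and choose a small correction $c_n$ to kill the periodicity defect. The paper executes the correction a bit differently: it picks three points $a_1,a_2,a_3$ with $\f(a_1),\f(a_2),\f(a_3)$ affinely independent and takes the correction $\nu_n$ to be piecewise constant with jumps $b_{n,i}$ at those points; after an integration by parts the constraint becomes the $3\times 3$ linear system $\sum_i b_{n,i}=0$, $\sum_i b_{n,i}\,\f_n(a_i)=\int(\f-\f_n)\lambda'$, which is solved by Cramer's rule. Your version, using two smooth test functions $\phi_1,\phi_2$ and a $2\times 2$ system, is a little cleaner, since periodicity of $c_n$ is automatic and only two parameters are needed to hit $\R^2$. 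The paper also verifies weak convergence of $q_n$ via uniform convergence of $\f_n$ (Lemma~\ref{unif2}) together with an integration by parts, whereas you argue directly from weak $\Ltwo$-convergence of $\f_n'$ and the fact that multiplication by the bounded function $\lambda i$ preserves it; both routes are valid.
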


\begin{proof}
We will search for $q_n$ of the form $q_n'=(\lambda+\nu_n)i\f_n'$.  
We will have $\|q_n\|$ bounded if $\|\nu_n\|_\infty$ is bounded.  
Hence we will have $q_n\to q$ weakly if $\|\nu_n\|_\infty$ is 
bounded and $\langle\ell,q-q_n\rangle\to 0$ for all smooth 
$\ell\in H$.  Now $|\langle\ell,q-q_n\rangle|$ is equal to:
\begin{equation}
\begin{split}
\left|\int_{\R/2\pi}\ell'\cdot(q'-q_n')\right|&=\left|\int_{\R/2\pi}\ell'\cdot
(\lambda i\f'-\lambda i\f_n'-\nu_n i\f_n')\right|\cr
&\leq\left|\int_{\R/2\pi}\ell'\lambda\cdot
i(\f'-\f_n')\right|+\left|\int_{\R/2\pi}\nu_n\ell'\cdot i\f_n'\right|\cr
&=\left|\int_{\R/2\pi}[\ell''\lambda+\ell'\lambda']\cdot
i[\f-\f_n]\right|+\left|\int_{\R/2\pi}\nu_n\ell'\cdot i\f_n'\right|\cr
&\leq 2\pi\|\ell''\lambda+\ell'\lambda'\|_\infty\|\f-\f_n\|_\infty
+\|\nu_n\|_\infty\|\ell'\|_\infty\sqrt{2\pi}\|\f_n\|
\end{split}
\end{equation}
By Lemma \ref{unif2}, $\|\f-\f_n\|_\infty\to 0$.  Thus in order 
for $q_n\to q$ weakly, all we need is $\|\nu_n\|_\infty\to 0$ and 
$\int_{\R/2\pi}(\lambda+\nu_n)\f_n'=0$ (because clearly we 
must have $\int_{\R/2\pi}q_n'=0$).  Using integration 
by parts, this last equality can be written:
\begin{equation}\label{needforc}
\int_{\R/2\pi}\f_n\nu_n'=\int_{\R/2\pi}[\f-\f_n]\lambda'
\end{equation}
We can pick $a_1$, $a_2$, and $a_3$ in $\R/2\pi$ such that:
\begin{equation}
\left|\begin{matrix}1&1&1\cr\f(a_1)\cdot\e_1&\f(a_2)\cdot\e_1&\f(a_3)\cdot\e_1
\cr\f(a_1)\cdot\e_2&\f(a_2)\cdot\e_2&\f(a_3)\cdot\e_2\end{matrix}\right|\geq 2\epsilon>0
\end{equation}
There exists an $N$ such that for every $n\geq N$, the determinant 
with $\f$ replaced with $\f_n$ is greater than $\epsilon$.  It 
suffices to choose $\nu_n$ for $n\geq N$.  Set 
$C_n=\int_{\R/2\pi}[\f-\f_n]\lambda'$.  We know that 
$|C_n|\leq 2\pi\|\lambda'\|_\infty\|\f-\f_n\|_\infty$.  We solve 
the following system of equations for $b_{n,i}\in\R$:
\begin{align}
\hphantom{\f_n(a_1)}b_{n,1}+\hphantom{\f_n(a_2)}b_{n,2}+\hphantom{\f_n(a_3)}b_{n,3}&=0\\
\f_n(a_1)b_{n,1}+\f_n(a_2)b_{n,2}+\f_n(a_3)b_{n,3}&=C_n
\end{align}
For $n\geq N$, we can use Cramer's Rule to give the follwing 
bound on the solution:
\begin{equation}
|b_{n,i}|\leq\epsilon^{-1}2[2\pi\|\lambda'\|_\infty\|\f-\f_n\|_\infty]2[\sqrt{2\pi}\|\f_n\|]
\end{equation}
Set $\nu_n(0)=0$ and:
\begin{equation}
\nu_n'(x)=b_{n,1}\delta(x-a_1)+b_{n,2}\delta(x-a_2)+b_{n,3}\delta(x-a_3)
\end{equation}
Then we will guarantee $\int_{\R/2\pi}\nu_n'=0$, equation 
(\ref{needforc}), and $\|\nu_n\|_\infty\to 0$.  Thus we 
will have $q_n\to q$ (weakly).
\end{proof}

\begin{proof}[Proof of Proposition \ref{impl}]
We will write $V(\f)$ for 
$\{(x,y)\in(\R/2\pi)^2:(\f,x,y)\in V\}$.  Also, if 
$Z\subset(\R/2\pi)^2$, we will write $T_Z$ for 
$\{t\in C((\R/2\pi)^2):t\geq 0\text{ and }\supp t\subseteq Z\}$.  
We assume Theorem \ref{needmax}.  Let 
$\pi_\f:H\map Q_\f$ be the orthogonal projection and let 
$J_\f=\pi_\f\circ R_\f'$.  Then Theorem \ref{needmax} implies 
``If $\f\in D_1$, $t\in T_{V(\f)}$, and $J_\f t=0$, then 
$t=0$''.

I claim that there exists $\epsilon>0$ such that 
$\|J_\f t\|\geq\epsilon\|t\|$ for 
all $\f\in D_1$ and $t\in T_{V(\f)}$.  If we suppose 
the contrary, then there exist two sequences, $\f_n\in D_1$ 
and $t_n\in T_{V(\f_n)}$ with $\|t_n\|=1$ 
such that $\|J_{\f_n}t_n\|\to 0$.  Since 
$D_1$ is weakly closed, it is compact by the 
Banach-Alaoglu Theorem, hence there exists a convergent 
subsequence of $\f_n$ which we assume WLOG is the 
whole sequence, so that $\f_n\to\f$.  Since 
this means that $\f_n\to\f$ uniformly, we will have 
$\|R'_{\f_n}-R'_\f\|\to 0$.  Thus:
\begin{equation}
\|\pi_{\f_n}R'_{\f_n}t_n\|\to 0\implies\|\pi_{\f_n}R'_\f t_n\|\to 0
\end{equation}
Now there is also a weak-$*$ convergent subsequence of the 
$t_n$ by the Banach-Alaoglu Theorem, which again WLOG 
is the whole sequence.  Thus $t_n\to t\in T_{V(\f)}$ 
since $V$ is closed; also $\|t\|=1$.  Pick some $q\in Q_\f$ 
which can be written as $q'=\lambda i\f'$ where $\lambda$ 
is smooth (such $q$ are dense in $Q_\f$).  Let $q_n\in Q_{\f_n}$ 
be the sequence guaranteed to exist by Lemma \ref{approx}.  We note 
that since $q_n$ is weakly convergent, it is bounded.  Now:
\begin{equation}\label{feq1}
0=\lim_{n\to\infty}\langle\pi_{\f_n}R'_\f t_n,q_n\rangle
=\lim_{n\to\infty}\langle R'_\f t_n,q_n\rangle
=\lim_{n\to\infty}\langle t_n,R_\f q_n\rangle
\end{equation}
Now by Lemma \ref{unif2}, $R_\f q_n\to R_\f q$ strongly.  
Thus the final limit in equation (\ref{feq1}) is equal to 
$\langle t,R_\f q\rangle$.  This means that $\langle R_\f't,q\rangle=0$ 
for a dense subset of $q\in Q_\f$.  Thus $J_\f t=0$ where 
$\f\in D_1$ and $t\in T_{V(\f)}-\{0\}$, contradicting 
Theorem \ref{needmax}.  Thus the claim is proved.

We can now show the existence of an appropriate $\varphi$ 
for every $\f\in D_1$ exactly as in the proof of Theorem 
\ref{farkas}.

Fix some $\f\in D_1$.  Let $t_n\in T_{V(\f)}$ be a 
sequence such that $\|t_n\|=1$ and:
\begin{equation}
\|J_\f t_n\|\to\inf_{\begin{smallmatrix}t\in T_{V(\f)}\cr\|t\|=1\end{smallmatrix}}\|J_\f t\|
=:w\geq\epsilon
\end{equation}
A subsequence is weak-$*$ convergent (WLOG the whole 
sequence) to a limit $t_{\infty}$.  Using the same reasoning 
as above, we conclude that $J_\f t_n\to J_\f t_{\infty}$ in 
the weak topology, so:
\begin{equation}
w\leq\|J_\f t_{\infty}\|\leq\liminf\|J_\f t_n\|=w
\end{equation}
Thus $\|J_\f t_{\infty}\|=w$.  Let $q:=J_\f t_{\infty}/\|J_\f t_{\infty}\|$.

Now I claim that $\langle J_\f t,q\rangle\geq w\|t\|$ 
for all $t\in T_{V(\f)}$.  Suppose not, that we have $t\in T_{V(\f)}$ 
with $\|t\|=1$ and $\langle J_\f t,q\rangle<w$.  
Then $\langle J_\f t,J_\f t_{\infty}\rangle<w^2$.  But 
consider then:
\begin{equation}
\begin{split}
\left.\frac d{d\eta}\right|_{\eta=0}
&\|J_\f((1-\eta)t_{\infty}+\eta t)\|^2\cr
&=\left.\frac d{d\eta}
\left[(1-\eta)^2\|J_\f t_{\infty}\|^2+2\eta(1-\eta)\langle J_\f t,J_\f t_{\infty}\rangle
+\eta^2 \|J_\f t\|^2\right]\right|_{\eta=0}\cr
&=-2w^2+2\langle J_\f t_{\infty}, J_\f t\rangle<0
\end{split}
\end{equation}
This is a contradiction since $\|(1-\eta)t_{\infty}+\eta t\|=1$.  
Hence the claim is proved.

Let $\varphi=q$.  Then:
\begin{equation}
\langle t,R_\f\varphi\rangle=\langle J_\f t,q\rangle\geq w\|t\|\geq\epsilon\|t\|
\text{ for all $t\in T_{V(\f)}$}
\end{equation}
This means that $R_\f\varphi(x,y)\geq\epsilon$ for all $(x,y)\in V(\f)$.
\end{proof}

\section{A Generalization of the Maxwell-Cremona Theorem}\label{maxwells}

Let $A\in\Lin(C_0(\R^2,\R^2),\R^2)$ 
have compact support.  Then by the Riesz Representation Theorem, 
$A$ can be thought of as a matrix of measures on $\R^2$:
\begin{equation}
A=\left(\begin{matrix}a&b\cr d&e\end{matrix}\right)
\end{equation}
We are concerned with the case when $A$ is symmetric, that 
is $b=d$.  For the moment, suppose 
$a$, $b$, and $e$ are continuous functions.  In this case, 
at each point $A$ has orthogonal eigenvectors $\vv_1$ and $\vv_2$ 
with eigenvalues $\lambda_1$ and $\lambda_2$.  We think of 
$A$ as representing a ``stress'' on the plane, where at each 
point, there is tension in the $\vv_i$ direction of magnitude 
$\lambda_i$.  It turns out that it is right to call such a 
stress is an ``equilibrium stress'' if:
\begin{equation}\label{eqstress}
A\grad g=0\text{ for all $g\in C_0^\infty(\R^2)$}
\end{equation}
In the case that $a$, $b$, and $e$ are continuous, it is 
straightforward to show that in fact:
\begin{equation}\label{maxc}
A=\left(\begin{matrix}a&b\cr b&e\end{matrix}\right)=
\left(\begin{matrix}\hfill c_{yy}&-c_{xy}\cr-c_{xy}&\hfill c_{xx}\end{matrix}\right)
\end{equation}
The function $c$ will be in $C_c(\R^2)$.  This is the 
Maxwell-Cremona ``lifting'' of the stress represented by 
$A$.

However, the notion of being an equilibrium stress (\ref{eqstress}) 
makes sense for any compactly supported $A$, so one would 
expect that (\ref{maxc}) should hold in some sense for all 
equilibrium stresses $A$.  If $\U$ is a smooth vector field 
and we integrate 
$\iint_{\R^2}\left(\begin{smallmatrix}\hfill c_{yy}&-c_{xy}\cr
-c_{xy}&\hfill c_{xx}\end{smallmatrix}\right)\U\,dx\,dy$ by parts, we get 
$\iint_{\R^2}c[i\grad\curl\U]\,dx\,dy$, so if (\ref{maxc}) holds 
in the distributional sense, we would like this last integral 
to give $A\U$ for smooth $\U$.  This is the intuition for 
the following theorem.

\begin{theorem}\label{maxwell}
Let $A\in\Lin(C_0(\R^2,\R^2),\R^2)$ have compact support.  
Suppose $A$ is symmetric, that is there exist 
$a,b,c\in C_0(\R^2)^*$ such that:
\begin{equation}
A=\left(\begin{matrix}a&b\cr b&e\end{matrix}\right)
\end{equation}
Additionally, suppose that for every $g\in C_0^\infty(\R^2)$, 
$A\grad g=0$.  Then there exists $c\in C_c(\R^2)$ such that 
for all $\U\in C_0^\infty(\R^2,\R^2)$:
\begin{equation}\label{mconc}
A\U=\iint_{\R^2}c[i\grad\curl\U]\,dx\,dy
\end{equation}
\end{theorem}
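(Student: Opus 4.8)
The plan is to reduce the statement to a cohomological triviality by showing that the equilibrium condition $A\grad g=0$ forces the measure-matrix $A$ to be ``exact'' in the appropriate sense, and then to integrate twice to produce the scalar $c$. First I would unwind what (\ref{mconc}) means: writing $\U=(\U_1,\U_2)$ one has $\curl\U=\partial_x\U_2-\partial_y\U_1$, so $i\grad\curl\U = (-\partial_y\curl\U,\ \partial_x\curl\U) = (-\partial_{xy}\U_2+\partial_{yy}\U_1,\ \partial_{xx}\U_2-\partial_{xy}\U_1)$; pairing against $c$ and integrating by parts (which is legitimate since $\U$ is smooth and compactly supported and $c$ will be a distribution of compact support) shows that (\ref{mconc}) is precisely the statement that $A = \left(\begin{smallmatrix}\hfill c_{yy}&-c_{xy}\cr -c_{xy}&\hfill c_{xx}\end{smallmatrix}\right)$ as matrices of distributions. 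So the theorem is: an equilibrium stress $A$ (a symmetric matrix of compactly supported measures annihilating all $A\grad g$) is the Hessian-rotation of a single compactly supported distribution $c$.

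The argument I would give proceeds in two integration steps, each using a Poincaré-type lemma for compactly supported distributions on $\R^2$ (a distribution $f$ of compact support with $\int f = 0$, i.e. annihilating constants, is $\partial_x p + \partial_y q$ for compactly supported distributions $p,q$; equivalently $f$ annihilates constants iff it is the divergence of a compactly supported vector distribution — this follows by convolving the standard Newtonian or gradient potential against $f$ and checking support, or by Fourier analysis). \emph{Step 1:} expand the equilibrium condition. Applying $A$ to $\grad g = (g_x, g_y)$ gives the two scalar equations $a g_x + b g_y = 0$ and $b g_x + e g_y = 0$ in the sense of distributions, for all $g\in C_0^\infty$. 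The first says $\partial_x(a) \cdot(-1) $... more precisely, testing $a g_x + b g_y$ against nothing — rather, the equation $\langle a, g_x\rangle + \langle b, g_y\rangle = 0$ for all $g$ means $-\partial_x a - \partial_y b = 0$ as a distribution, i.e. the row vector $(a,b)$ is divergence-free; similarly $(b,e)$ is divergence-free. \emph{Step 2:} a divergence-free compactly supported vector distribution on $\R^2$ is a rotated gradient: $(a,b) = (\partial_y u, -\partial_x u)$... wait, one must be careful, since $(a,b)$ need not be compactly-supported-exact without a moment condition; but $a,b$ \emph{are} compactly supported by hypothesis, and divergence-free compactly supported distributions on $\R^2$ are exactly $(-\partial_y \psi, \partial_x \psi)$ for $\psi$ compactly supported (this is the content of the compactly supported de Rham / Poincaré lemma, provable by explicit potentials). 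So there exist compactly supported distributions $\psi_1,\psi_2$ with $(a,b) = (\partial_y\psi_1, -\partial_x\psi_1)$ and $(b,e) = (\partial_y\psi_2, -\partial_x\psi_2)$. The shared entry $b$ gives $-\partial_x\psi_1 = \partial_y\psi_2$, i.e. the vector $(\psi_2, \psi_1)$ is curl-free (or rather $\partial_x\psi_1 + \partial_y\psi_2 = 0$ again), so applying the lemma once more yields a compactly supported distribution $c$ with $\psi_1 = c_{yy}$... one sets it up so that $\psi_1 = \partial_y c$ and $\psi_2 = -\partial_x c$ up to signs chosen to match (\ref{maxc}); then $a = \partial_y\psi_1 = c_{yy}$, $b = -\partial_x\psi_1 = -c_{xy}$, $e = -\partial_x\psi_2 = c_{xx}$, which is exactly the claim. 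Finally, one must verify $c$ has compact support, which follows because at each stage the potentials can be chosen compactly supported (the input is compactly supported and annihilates the relevant constants/linear functions, by the equilibrium condition applied to $g$ ranging over functions that are affine on a large ball).

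The main obstacle I expect is the Poincaré lemma for \emph{compactly supported} distributions, together with keeping track of the necessary moment conditions: a compactly supported distribution $f$ is $\partial_x p + \partial_y q$ with $p,q$ compactly supported if and only if $\langle f, 1\rangle = 0$, and one gets $\langle f,1\rangle = 0$ from the equilibrium hypothesis by taking $g$ in (\ref{eqstress}) to be a cutoff of a linear function so that $\grad g$ equals a fixed constant vector on $\supp A$ — this makes $A\grad g = A(\text{const})$, and unwinding gives that the total masses of the rows vanish. One then iterates this: passing to $c$ requires the intermediate potentials $\psi_i$ to themselves have vanishing total mass, which again traces back to the equilibrium condition (now against affine $g$). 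I would organize the proof so that these moment computations are done once, up front, from (\ref{eqstress}), and then invoke the compactly supported Poincaré lemma twice as a black box (citing or giving the short potential-theoretic proof: convolve with $\tfrac{1}{2\pi}\log|x|$ or its derivatives and check that the resulting distribution has the right support using that $f$ is supported in a compact set and has the right vanishing moments). The rest — the index juggling relating $\psi_1,\psi_2$ to $c$, and matching signs with (\ref{maxc}) — is routine bookkeeping, and the translation between (\ref{maxc}) and (\ref{mconc}) via integration by parts is the easy part already sketched in the paragraph preceding the theorem statement.
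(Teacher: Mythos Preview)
Your reduction is correct as far as it goes: the equilibrium condition does give $\operatorname{div}(a,b)=\operatorname{div}(b,e)=0$ in the sense of distributions, and iterating a compactly-supported Poincar\'e lemma does produce a compactly supported distribution $c$ with $a=c_{yy}$, $b=-c_{xy}$, $e=c_{xx}$. (Your suggestion to obtain the potentials by convolving with $\tfrac{1}{2\pi}\log|x|$ is the wrong tool for this, since convolution with a non-compactly-supported kernel destroys compact support; but the direct antiderivative construction $\psi(x,y)=\int_{-\infty}^y a(x,t)\,dt$ does work at the level of distributions once you check the relevant integrals vanish, which follows from the divergence-free condition.)

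The genuine gap is regularity. The theorem asserts $c\in C_c(\R^2)$, i.e.\ $c$ is an honest continuous function, and this is essential for the application in Theorem~\ref{needmax}, where one takes $M=\sup c$ and studies level sets. Your argument only produces $c$ as a compactly supported distribution whose second partials are finite measures, and you never address why such a $c$ must be continuous. This is not automatic: in two dimensions, having measure-valued Hessian does not by itself force continuity without further structure. The paper's proof is almost entirely devoted to this point. It mollifies $A$ to $A_\eta=A\ast\phi_\eta$, constructs smooth $c^{(\eta)}$ for each $\eta$, and then proves a uniform equicontinuity estimate for the family $\{c^{(\eta)}\}$ so that Arzel\`a--Ascoli yields a continuous limit. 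The equicontinuity estimate is the heart of the matter and uses a nontrivial measure-theoretic fact: after first showing that $A$ has no atoms (from the equilibrium hypothesis), one shows that the pushforward of $|A|$ onto the line at angle $\theta$ has a nonzero pure-point part for at most countably many $\theta$, and hence for almost every direction the mass of $|A|$ in a thin strip tends to zero with the strip's width. This is what controls $|c^{(\eta)}(\p)-c^{(\eta)}(\q)|$ uniformly in $\eta$. None of this appears in your outline, and without it the conclusion $c\in C_c(\R^2)$ is unproved.
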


\begin{proof}
First, let us show that (the matrix of measures associated with) 
$A$ has no pure point part.  Let $\p$ 
and $\vv$ be arbitrary.  Choose $g\in C_0^\infty(\R^2)$ so 
that $\grad g(\p)=\vv$.  Then $0=A(\grad g)(\epsilon(\cdot)+\p)$, 
but as $\epsilon\to 0$, right hand side approaches the pure 
point part of $A$ at $\p$ applied to $\vv$.  Hence $A$ has 
no pure point part.

Consider the measure $|A|\in C_0(\R^2)^*$, where the 
$|\cdot|$ of a matrix is its operator norm.  In other words, 
for $f\geq 0$, we define:
\begin{equation}
|A|f:=\sup_{\begin{smallmatrix}\theta:\R^2\map\R\cr\psi:\R^2\map\R\end{smallmatrix}}\iint_{\R^2}
\left(\begin{matrix}\cos\theta&\sin\theta\end{matrix}\right)
\left(\begin{matrix}a&b\cr b&e\end{matrix}\right)
\left(\begin{matrix}\cos\psi\cr \sin\psi\end{matrix}\right)f
\end{equation}
We know $|A|$ comes from a measure, which we will also denote 
$|A|$.  Let $\mu(\theta)$ 
be the measure on the real line $\R$ at angle $\theta$ passing 
through the origin, obtained by projecting the 
measure $|A|$ orthogonally onto the line.  In other words:
\begin{equation}
\int_\R f(x)\,d\mu(\theta)=\iint_{\R^2}f((x,y)\cdot(\cos\theta,\sin\theta))|A|
\end{equation}
Now let $\mu_{\text{pp}}(\theta)$ be the pure point part of 
$\mu(\theta)$.  I claim that $\mu_{\text{pp}}(\theta)\ne 0$ for 
at most countably many $\theta$.  We note that this is implied 
by the following:
\begin{equation}\label{crit}
\sum_{i=1}^N\|\mu_{\text{pp}}(\theta_i)\|\leq\||A|\|\text{ whenever $\theta_i$ are distinct}
\end{equation}
But (\ref{crit}) is true because any part of $|A|$ which 
contributes to both $\|\mu_{\text{pp}}(\theta_i)\|$ and 
$\|\mu_{\text{pp}}(\theta_j)\|$ would have to be supported on 
a countable set of points, and hence would have to be pure point, 
which we know $A$, and hence $|A|$ does not have.  Now let 
$m(\theta,h)=\sup_{x\in\R}\int_x^{x+h}\mu(\theta)(y)\,dy$.  
Now $m(\theta,h)\to 0$ as $h\to 0$ if $\mu(\theta)$ has no 
pure point part, thus $m(\theta,h)\to 0$ for almost all $\theta$.  
This fact being proved, we can proceed to the construction of $c$.

Let $\phi$ be a smooth real valued even function on 
$\R^2$ with support contained in the unit disc which satisfies 
$\phi\geq 0$ and $\iint_{\R^2}\phi=1$.  Let 
$\phi_\eta(\p)=\eta^{-2}\phi(\eta^{-1}\p)$.  We can then define 
the operator:
\begin{equation}
A_\eta=A*\phi_\eta=\left(\begin{matrix}
a^{(\eta)}&b^{(\eta)}\cr b^{(\eta)}&e^{(\eta)}\end{matrix}\right)
\end{equation}

Now we know that:
\begin{equation}
a^{(\eta)},b^{(\eta)},e^{(\eta)}\in C_0^\infty(\R^2)\text{ and that 
$A_\eta\grad g=0$ for all $g\in C_0^\infty(\R^2)$}
\end{equation}
Thus the vector fields $(a^{(\eta)},b^{(\eta)})$ and 
$(b^{(\eta)},e^{(\eta)})$ have zero divergence.  That 
means there exist $f^{(\eta)},g^{(\eta)}\in C_0^\infty(\R^2)$ such 
that $a^{(\eta)}=f^{(\eta)}_y$, 
$b^{(\eta)}=-f^{(\eta)}_x=-g^{(\eta)}_y$, and 
$e^{(\eta)}=g^{(\eta)}_x$.  The equality $f^{(\eta)}_x=g^{(\eta)}_y$ implies 
that there exists $c^{(\eta)}\in C_0^\infty(\R^2)$ such 
that $f^{(\eta)}=c^{(\eta)}_y$ and $g^{(\eta)}=c^{(\eta)}_x$.  
In other words:
\begin{equation}
A_\eta=\left(\begin{matrix}\hfill c^{(\eta)}_{yy}&-c^{(\eta)}_{xy}
\cr-c^{(\eta)}_{xy}&\hfill c^{(\eta)}_{xx}\end{matrix}\right)
\end{equation}

{\bf Claim: For every $\epsilon>0$, there exist $\delta>0$ 
and $\eta_0>0$ such that:
\begin{equation}
\eta_0>\eta>0\text{ and }|\q-\p|<\delta\implies|c^{(\eta)}(\p)-c^{(\eta)}(\q)|<\epsilon
\end{equation}}

Let $\epsilon>0$ be given.  Suppose $\p=(x_0,y)\in\R^2$ and 
$\q\in\R^2$ and we wish to bound 
$|c^{(\eta)}(\p)-c^{(\eta)}(\q)|$ given $|\q-\p|<\delta$.  
To simplify notation, we will for the moment assume 
that $\q=(x,y)$.  Then:
\begin{equation}
\begin{split}
\left|c^{(\eta)}(\p)-c^{(\eta)}(\q)\right|&=\left|\int_{x_0}^xc^{(\eta)}_x(t,y)\,dt\right|
=\left|\int_{x_0}^x\int_{-\infty}^yc^{(\eta)}_{xy}(t,z)\,dz\,dt\right|\cr
&\leq\int_{x_0}^x\int_{-\infty}^\infty\left|b^{(\eta)}(t,z)\right|\,dz\,dt
\leq\int_{x_0-\eta}^{x+\eta}\int_{-\infty}^\infty\left|A(t,z)\right|\,dz\,dt\cr
&\leq m(0,\delta+2\eta)
\end{split}
\end{equation}
Similary, if $\theta_\p^\q$ is the angle of the segment from 
$\p$ to $\q$, then we have:
\begin{equation}
\left|c^{(\eta)}(\p)-c^{(\eta)}(\q)\right|\leq m(\theta_\p^\q,2\eta+\delta)
\end{equation}
Now since $m(\theta,h)\to 0$ as $h\to 0$ for all but at most 
countably many $\theta$, there exists $h>0$ such that the 
measure of the set $\{\theta:m(\theta,h)<\epsilon/4\}$ 
is more than $\frac{5\pi}3$.  Then if $2\eta+\delta<
\min(\epsilon/(4\pi\|t\|),h)$ and the slope the segment from 
$\p$ to $\q$ is not in the exceptional set of $\theta$ 
(which has measure less than $\frac\pi 3$), 
then $|c^{(\eta)}(\p)-c^{(\eta)}(\q)|\leq\epsilon/2$.  But 
for any $\p$ and $\q$ within $\delta$ of each other, 
we can find a $\rr$ within $\delta$ of both $\p$ and 
$\q$ so that neither of the segments $\p$ to $\rr$ and 
$\rr$ to $\q$ are in the exceptional set of $\theta$.  
Hence by the triangle inequality, 
$|c^{(\eta)}(\p)-c^{(\eta)}(\q)|\leq\epsilon$ if we set 
$\eta_0=\delta=\frac 14\min(\epsilon/(4\pi\|t\|),h)$.  
Thus the claim is true.

Now by the Arzel\`a-Ascoli Theorem, there exists a 
subsequence of $c^{(1/n)}$ which converges uniformly 
to a continous function $c\in C_c(\R^2)$.  Thus let 
$\eta_i\to 0$ and satisfy $c^{(\eta_i)}\to c\in C_c(\R^2)$ 
uniformly as $i\to\infty$.  As remarked before, if $\U$ 
is smooth compactly supported vector field, then it is 
a straightforward integration by parts to show:
\begin{equation}
A(\U*\phi_{\eta_i})=A_{\eta_i}\U=\iint_{\R^2}c^{(\eta_i)}[i\grad\curl\U]\,dx\,dy
\end{equation}
Taking the limit as $i\to\infty$, we obtain (\ref{mconc}) 
as was to be shown.
\end{proof}

\section{Open Problems}

Now, I can state some conjectures on possible strengthening 
of Theorem \ref{main}.  For example, we can 
conjecture that there exists an $\h$ which is not only 
continuous, but in fact smooth.  Also, if the initial curve is 
smooth, we can require that the curve be smooth at every 
time during the deformation.

\begin{conjecture}
Given a unit speed simple closed curve $\f:\R/2\pi\map\C$, 
there exists a smooth function $\h:[0,1]\map\mathcal D$ satisfying 
(1)--(4).
\end{conjecture}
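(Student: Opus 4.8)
The plan is to carry out, for a smooth initial curve, the continuous analogue of the Connelly-Demaine-Rote strategy that the generalized program of this paper was built to support: produce a vector field of infinitesimal expansions depending smoothly on the curve, flow along it, and reparametrize the time variable so that the convex limit is reached exactly at $t=1$. Concretely, for each non-convex curve $\g$ in a suitable bounded-geometry class inside $D$, let $\varphi_\g\in Q_\g$ be an infinitesimal expansion, normalized to $\|\varphi_\g\|=1$, with $R_\g\varphi_\g>0$ on a closed set $V(\g)$ of non-chord pairs chosen to depend on $\g$ only through its straight-section structure; Corollary \ref{cgen} guarantees the existence of such a $\varphi_\g$, and on any weakly closed subfamily of non-convex curves Theorem \ref{gen} upgrades this to a uniform lower bound $R_\g\varphi_\g\geq\epsilon$. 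Along the flow $\h'(t)=\varphi_{\h(t)}$, two of the four required properties are then automatic: since $\varphi_\g\in Q_\g$ we have $\varphi_\g'\cdot\g'\equiv 0$, so arc length is preserved, giving (3); and $R_\g\varphi_\g\geq 0$ makes every pairwise distance nondecreasing, giving (4). Property (1) is the initial condition, and (2) is what the limiting analysis must deliver.

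The first genuine step, and the one I expect to be the main obstacle, is to upgrade ``there exists $\varphi_\g$'' to a \emph{smooth} assignment $\g\mapsto\varphi_\g$ valued in $\mathcal D$. The construction inside the proof of Proposition \ref{impl} is already canonical: $\varphi_\g$ is the normalization of $J_\g t_\infty$, where $t_\infty$ minimizes $\|J_\g t\|$ over $\{t\in T_{V(\g)}:\|t\|=1\}$. One would try to prove this minimizer depends smoothly on $\g$ by an implicit-function argument once its uniqueness and a nondegenerate second variation are in hand; where uniqueness or nondegeneracy fail, one would mollify---replace $\g$ by $\g*\phi_\eta$, penalize with $\min_t(\|J_\g t\|^2+\eta\|t\|^2)$, or average $J_\g$ against a smooth family of weightings of the cone $T_{V(\g)}$---to manufacture a smooth selection. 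The difficulty is structural: by Lemmas \ref{unif2} and \ref{approx} the data $T_{V(\g)}$ and $J_\g$ vary with $\g$ continuously but not visibly smoothly, and the minimizer of a convex functional over a moving convex set is in general only Lipschitz. A second regularity point is that $J_\g t_\infty=\pi_\g R_\g' t_\infty$ has derivative only in $\Ltwo$, so even for smooth $\g$ it need not lie in $\mathcal D$; one must therefore also replace the extremal measure $t_\infty$ by a smooth absolutely continuous approximant that still yields a strict expansion on $V(\g)$, keeping the flow inside $\mathcal D$.

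Granting such a smooth, locally Lipschitz vector field $\g\mapsto\varphi_\g$, apply the Picard-Lindel\"of theorem in the Banach space $\mathcal D$ to solve $\h'(t)=\varphi_{\h(t)}$, $\h(0)=\f$, on a maximal interval; smoothness of $\h$ in $t$ is then immediate by differentiating the equation. One checks the orbit stays among non-convex bounded-geometry curves: it remains simple because the intrinsically-far pairs in $V(\h(t))$ have strictly increasing extrinsic distance while the nearby pairs are controlled by the uniform local-graph bound, and the open conditions defining $D$ persist over all time since the class was taken with room to spare. Because the unit-speed constraint pins the $\mathcal D$-norm of $\h(t)$ at $1$, Banach-Alaoglu extracts a limit $\h(\infty)$ along some $t_k\to\infty$; the monotone bounded quantity $\mathcal E(\h(t))$ converges, which---using the continuity of $\g\mapsto\varphi_\g$ together with the uniform lower bound of Theorem \ref{gen} on any compact subfamily bounded away from convexity---forces $\h(\infty)$ to admit no strict expansion, hence by Corollary \ref{cgen} to be convex.

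Finally, compose the flow with an increasing diffeomorphism of $[0,\infty)$ onto $[0,1)$ and set $\h(1):=\h(\infty)$; the subtle point hidden in the word ``smooth'' is to pick this reparametrization so that $\h$ extends $C^\infty$ across $t=1$. This needs a decay estimate for $\|\h(t)-\h(\infty)\|$ and all its $t$-derivatives near the limit---for which a quantitative lower bound on expansiveness as one approaches convexity, again of the type furnished by Theorem \ref{gen}, should suffice---after which a sufficiently rapid time change flattens every derivative at the endpoint. The strengthening noted just after the conjecture, that a smooth initial curve may be kept smooth throughout, would follow from the same scheme once the regularized vector field $\varphi_\g$ is constructed so as to map $C^\infty$ curves to $C^\infty$ variations with control on each H\"older or Sobolev seminorm, so that the flow preserves that regularity class.
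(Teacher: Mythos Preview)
This statement is a \emph{conjecture} in the paper---it appears in the ``Open Problems'' section and the paper offers no proof. So the comparison is not between two proofs but between your outline and the open status of the problem, and your write-up is explicitly a plan rather than a proof: you flag the smooth selection $\g\mapsto\varphi_\g$ as ``the main obstacle,'' hedge with ``granting such a smooth, locally Lipschitz vector field,'' and defer the endpoint decay estimate. Those admissions are honest, but they mean the proposal does not resolve the conjecture.

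Beyond the steps you already mark as incomplete, there are two structural gaps worth naming. First, the conjecture is stated for an arbitrary rectifiable unit-speed simple closed curve, whereas your entire mechanism runs through Corollary~\ref{cgen} and Theorem~\ref{gen}, which apply only to curves in $D$---in particular, curves that are locally graph-like. The pathological examples in Section~\ref{path} (the infinite spiral, for instance) are exactly the sort of curve that fails condition (3) in the definition of $D$, so your flow never gets started for them; restricting to smooth $\f$ at the outset is already a change of hypothesis. Second, even on $D$, Corollary~\ref{cgen} yields $R_\g\varphi_\g>0$ only on a closed $V$ that must avoid a neighborhood of the diagonal, as the paper stresses right after the statement of Theorem~\ref{gen}. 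Your claim that ``$R_\g\varphi_\g\geq 0$ makes every pairwise distance nondecreasing'' is therefore not justified for nearby pairs $(x,y)$: nothing in the program guarantees $(\f(x)-\f(y))\cdot(\varphi(x)-\varphi(y))\geq 0$ for $|x-y|$ small, so property (4) is not automatic along the flow. Controlling simplicity via a local-graph bound is not the same as controlling monotonicity of all pairwise distances. This is precisely the content of Conjecture~6.3 and the reason the paper lists it separately as open.
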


\begin{conjecture}
Given a smooth unit speed simple closed curve $\f:\R/2\pi\map\C$, 
there exists a continuous function $\h:[0,1]\map\mathcal D$ satisfying 
(1)--(4) as well as:
\begin{itemize}
\item[(5)] $\h(t)(x)$ is a smooth function of $x$ for all $t\in[0,1]$.
\end{itemize}
\end{conjecture}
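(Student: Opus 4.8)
The plan is to prove the conjecture via the continuous analogue of the CDR program developed in this paper (Corollary~\ref{cgen} and Theorems~\ref{gen}, \ref{needmax}, \ref{farkas}, \ref{maxwell}) rather than by polygonal approximation: approximation by polygons cannot preserve smoothness, and a direct mollification of the motion $\h$ of Theorem~\ref{main} also fails, since convolving $\h(t)(x)$ in $x$ destroys both the unit-speed property and expansiveness. Normalize so that $\f(0)=0$; a smooth unit-speed simple closed curve then lies in the class $D$ of Section~3, because regularity gives $\f'\ne 0$ everywhere and continuity of $\f'$ makes $\f$ locally graph-like. If $\f$ is already convex there is nothing to do, so assume it is not. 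The idea is to realize $\h$ as the trajectory of a time-dependent vector field
\begin{equation}
\frac{d}{dt}\h(t)=\varphi_{\h(t)},\qquad \h(0)=\f,
\end{equation}
where for each non-convex $\g\in D$ we take $\varphi_\g\in Q_\g$ to be an infinitesimal expansion furnished by Corollary~\ref{cgen}, with a suitable closed $V(\g)\subset(\R/2\pi)^2$, and chosen to depend regularly on $\g$. Since $\varphi_\g\in Q_\g$ means $\partial_x\varphi_\g\cdot\partial_x\g\equiv 0$, one has $\tfrac{d}{dt}|\partial_x\h(t)|^2=2\,\partial_x\h(t)\cdot\partial_x\varphi_{\h(t)}=0$, so speed is preserved and (3) holds automatically; since $R_{\h(t)}\varphi_{\h(t)}\ge 0$, the derivative $\tfrac{d}{dt}|\h(t)(x)-\h(t)(y)|^2=2R_{\h(t)}\varphi_{\h(t)}(x,y)$ is nonnegative on the pairs controlled by $V$, which is most of (4); and, the flow being an expansion, one expects $\h(t)$ to converge to a convex curve, so that after reparametrizing the time variable onto $[0,1]$ one gets (1) and (2).

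The genuinely new ingredient is to make $\varphi_\g$ smooth in $x$ and to control its dependence on $\g$. The Farkas-type argument behind Proposition~\ref{impl} produces $\varphi_\g=J_\g t_\infty/\|J_\g t_\infty\|$, with $J_\g=\pi_\g\circ R_\g'$ and $t_\infty$ an extremal positive measure, and this carries no a priori smoothness. I would upgrade it either by replacing the minimization over all of $T_{V(\g)}$ by a minimization over a Sobolev-regular subcone --- mollifying $t_\infty$ and correcting it back into $Q_\g$ via the mechanism of Lemma~\ref{approx} --- or by using the convexity and weak-$*$ compactness of the set of admissible expansions to pick out a smooth representative, for instance by averaging against a smooth kernel. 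One must then prove that $\g\mapsto\varphi_\g$ is locally Lipschitz from a space of smooth curves into itself, possibly only after allowing a loss of finitely many derivatives, in which case a tame (Nash--Moser) form of Picard--Lindel\"of is needed, so that the flow exists and keeps $\h(t)(x)$ smooth in $x$, giving (5). Continuity of $\h$ as a map $[0,1]\map\mathcal D$ should then follow as in the proof of Theorem~\ref{main}, as should continuity at the endpoint $t=1$ via the rigidity argument used there.

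One must also check that $\h(t)$ stays in $D$ and that the flow actually reaches a convex curve. Along the flow $\partial_x\h(t)\ne 0$ is preserved because the speed is constant; ``locally graph-like'' is an open condition, so it persists for short time, and since the curve is straightening one expects it to persist for all time; and simplicity (injectivity) follows from expansiveness by the topological argument of \cite{cdr}, an expansive deformation of a simple closed curve being unable to create a self-crossing. For convergence, a restricted energy $\iint_{V}|\h(t)(x)-\h(t)(y)|\,dx\,dy$ is nondecreasing and bounded above (the length $2\pi$ bounds the diameter, hence the integrand), while the quantitative lower bound on the rate of expansion coming from Theorem~\ref{gen} forces it to increase at a definite rate as long as $\h(t)$ remains a fixed distance from the weakly closed set of convex curves; weak-$*$ compactness then gives subconvergence of $\h(t)$ to a convex limit, which is adjoined at $t=1$ after reparametrizing $[0,\infty)\to[0,1)$.

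The step I expect to be the main obstacle is condition (4) in its full strength. The continuous program only guarantees $R_\g\varphi_\g\ge\epsilon$ on a closed set $V$ that necessarily omits a neighborhood of the diagonal of $(\R/2\pi)^2$, as emphasized in the remark preceding Theorem~\ref{needmax}. For a smooth curve with no straight arcs one can take $V(\g)=\{(x,y):|x-y|\ge\delta\}$ and treat near-diagonal pairs separately --- as the curve straightens, $|\h(t)(x)-\h(t)(y)|$ approaches from below the \emph{preserved} intrinsic distance $|x-y|$ --- but turning this into genuine \emph{monotonicity} in $t$ for \emph{all} pairs, simultaneously and for the particular field being integrated, is precisely the continuous-analogue difficulty the paper flags. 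Together with the tame-regularity questions above, this is where the real work lies.
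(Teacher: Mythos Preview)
This statement is a \emph{conjecture} in the paper, listed in the ``Open Problems'' section; the paper offers no proof. There is therefore nothing to compare your attempt against, and the correct baseline is that the result is not known.

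Your proposal is not a proof but a research programme, and you are candid about this: you explicitly say that making $\varphi_\g$ smooth in $x$ is ``the genuinely new ingredient'' for which you only sketch possible approaches (mollification plus correction, or averaging over the admissible cone), that the dependence $\g\mapsto\varphi_\g$ may require a Nash--Moser argument you do not carry out, and that full expansiveness for pairs near the diagonal ``is where the real work lies.'' These are exactly the obstructions the paper itself flags as unresolved --- the remark before Theorem~\ref{needmax} about the forced exclusion of a neighborhood of the diagonal, and the Meta-Conjecture at the end calling for a proof of Theorem~\ref{main} that does not go through polygonal approximation. So your outline is reasonable as a strategy, and the difficulties you isolate are the right ones, but none of the hard steps is actually done: you have not produced a smooth $\varphi_\g$, you have not established any regularity of $\g\mapsto\varphi_\g$, you have not shown the flow exists, and you have not closed the gap for near-diagonal pairs. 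Until those steps are supplied, this remains a conjecture.
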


I also conjecture that it is possible to extend Corollary \ref{cgen} 
to something resembling the following.

\begin{conjecture}
Suppose $\f:\R/2\pi\map\R^2$ is a rectifiable simple closed 
curve which is not convex.  Then there exists 
$\varphi:\R/2\pi\map\R^2$ which is absolutely continuous and 
satisfies $\f'\cdot\varphi'\equiv 0$, as well as 
$(\f(x)-\f(y))\cdot(\varphi(x)-\varphi(y))>0$ whenever the 
line segment connecting $\f(x)$ and $\f(y)$ is not 
completely contained in $\f(\R/2\pi)$.
\end{conjecture}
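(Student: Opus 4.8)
\medskip\noindent\textit{Sketch of a possible approach.}

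The plan is to reduce the statement to a strengthened form of Corollary \ref{cgen} by exhaustion. Reparametrising, assume $\f$ has unit speed. Put $W=\{(x,y)\in(\R/2\pi)^2:\text{the segment }[\f(x),\f(y)]\text{ is not contained in }\f(\R/2\pi)\}$. Since $\f(\R/2\pi)$ is compact, the set of pairs whose segment \emph{is} contained in the curve is closed (a segment joining a limit of such pairs is a limit of such segments), so $W$ is open; moreover $W$ is disjoint from the diagonal $\Delta$, and in the compact space $(\R/2\pi)^2$ two disjoint closed sets are at positive distance, so \emph{every} closed subset of $W$ is bounded away from $\Delta$. Write $W=\bigcup_{n\ge 1}K_n$ with $K_n$ compact, increasing, each bounded away from $\Delta$, and each consisting of pairs whose segments miss $\f(\R/2\pi)$.

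The reduction is: it is enough to produce, for every compact $K\subseteq W$, a variation $\psi_K\in Q_\f$ with $R_\f\psi_K\ge 0$ on all of $(\R/2\pi)^2$ and $R_\f\psi_K\ge\epsilon_K$ on $K$ for some $\epsilon_K>0$. Indeed, setting $\varphi_n=\psi_{K_n}/\|\psi_{K_n}\|$ and $\varphi=\sum_{n\ge1}2^{-n}\varphi_n$, the series converges in $H$, so $\varphi\in Q_\f$ is absolutely continuous with $\f'\cdot\varphi'\equiv0$; since $R_\f$ is bounded and linear, $R_\f\varphi=\sum_n 2^{-n}R_\f\varphi_n$ uniformly, with every summand pointwise nonnegative, so for $(x,y)\in W$ --- picking $N$ with $(x,y)\in K_N$ --- one gets $R_\f\varphi(x,y)\ge 2^{-N}R_\f\varphi_N(x,y)>0$. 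This is exactly why the global nonnegativity of each $\psi_{K_n}$ matters: without it the uncontrolled off-$K_n$ contributions of the early terms would swamp the positive term near $\Delta$, and Corollary \ref{cgen} alone, which controls the sign only on $K$, is not enough. The $\varphi$ so obtained is the one required by the conjecture.

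So I would work to upgrade Corollary \ref{cgen} to yield, in addition, $R_\f\psi_K\ge 0$ everywhere. Re-run the Farkas-type argument of Proposition \ref{impl} inside the closed convex \emph{cone} $P=\{\chi\in Q_\f:R_\f\chi\ge 0\text{ on }(\R/2\pi)^2\}$: seek $\psi\in P$ with $\|\psi\|\le 1$ maximising $\min_K R_\f\psi$. If that maximum were $0$, then the convex cone $\{R_\f\chi|_K:\chi\in P\}-C(K)_{\ge0}$ would miss the (nonempty) interior of the positive cone of $C(K)$, and Hahn--Banach would furnish a nonzero positive measure $t$ with $\operatorname{supp}t\subseteq K$ and $\langle t,R_\f\chi\rangle=0$ for every $\chi\in P$. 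One then needs $\langle t,R_\f\chi\rangle=0$ for every $\chi\in Q_\f$, which would contradict Theorem \ref{needmax} since $\f$ is not convex and $\operatorname{supp}t\subseteq K\subseteq W$. This last implication holds as soon as $\operatorname{span}P=Q_\f$; I would try to prove the latter by exhibiting a single $v\in P$ that is strictly expansive off $\Delta$ and whose expansion rate near $\Delta$ dominates that of an arbitrary $\chi\in Q_\f$ (so that $v+\eta\chi\in P$ for small $\eta$, whence $\chi\in\operatorname{span}P$). For a $C^2$ curve one can take a suitable tangent-angle-weighted normal variation ($\varphi'=\alpha\,i\,\f'$, with $\alpha$ the turning angle), for which $R_\f\varphi(x,y)$ reduces on short arcs to $\tfrac12\iint_{[y,x]^2}(\alpha(t)-\alpha(s))\sin(\alpha(t)-\alpha(s))$, nonnegative and strictly positive unless the arc is straight.

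The main obstacle is precisely this reconciliation of a cone constraint (global nonnegativity) with machinery --- the Farkas Lemma (Theorem \ref{farkas}) and the Maxwell--Cremona Theorem (Theorem \ref{maxwell}), the latter entering through Theorem \ref{needmax} --- that is built for the \emph{subspace} $Q_\f$; concretely, I expect the hard part to be either proving $\operatorname{span}P=Q_\f$ or, equivalently, establishing a conical version of Theorem \ref{needmax} whose hypothesis only tests $t$ against $P$. A second, independent difficulty is that the program as written assumes $\f$ locally graph-like (condition (3) of $D$), a hypothesis used at the end of the proof of Theorem \ref{needmax} to straighten the curve and analyse the level sets of the Maxwell--Cremona potential $c$; extending that analysis to an arbitrary rectifiable Jordan curve --- one with cusps, unbounded total turning, or no tangent --- and finding the corresponding substitute for the smooth test variation $v$ above, is its own project. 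A purely limiting derivation of $\psi_K$ from the polygonal case of \cite{cdr}, parallel to the proof of Theorem \ref{main} --- the inscribed polygons' infinitesimal expansions are globally expansive, and $\mathcal P_m'\to\f'$ in $\Ltwo$ because $\int|\mathcal P_m'|^2\to\int|\f'|^2$, so $\f'\cdot\psi'\equiv0$ and $R_\f\psi\ge0$ pass to the weak limit without assuming $\f\in D$ --- does not seem to work, because one cannot in general keep the limit nonzero \emph{and} strictly expansive on $K$: that would need a lower bound, uniform in $m$, on the expansion rate of the polygonal expansions on $K$ relative to their norm, which \cite{cdr} does not obviously supply. This is why I expect the direct, conical strengthening of the program --- rather than a soft limiting argument --- to be what is needed.
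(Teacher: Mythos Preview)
The statement you are addressing is a \emph{conjecture} in the paper, listed under ``Open Problems'' with no proof supplied; the author explicitly presents it as a desired extension of Corollary~\ref{cgen} and follows it with the meta-conjecture that a proof of Theorem~\ref{main} not relying on polygonal approximation should exist. There is therefore no proof in the paper to compare your proposal against.

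Your write-up is not a proof either, and you do not pretend otherwise: it is a coherent programme with the gaps named. The reduction by exhaustion (summing $2^{-n}\varphi_n$ with each $\varphi_n\in P$ strictly expansive on $K_n$) is sound, and your observation that Corollary~\ref{cgen} alone is insufficient---because positivity only on $K_n$ cannot be summed---is exactly the point. The conical Hahn--Banach step is also correct as written: from $\langle t,R_\f\chi\rangle\le 0$ for $\chi\in P$ together with $t\ge 0$ and $R_\f\chi\ge 0$ one does get $\langle t,R_\f\chi\rangle=0$ on $P$, and hence on $\operatorname{span}P$. The two obstacles you isolate---establishing $\operatorname{span}P=Q_\f$ (equivalently, a version of Theorem~\ref{needmax} whose hypothesis tests $t$ only against the cone $P$), and removing the locally-graph-like hypothesis~(3) in the definition of $D$, on which the level-set analysis at the end of the proof of Theorem~\ref{needmax} genuinely depends---are the right ones, and are precisely why the author left this open.

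One caution on your candidate dominating variation. With $\f$ unit speed, $\f'=e^{i\theta}$, and $\varphi'=\theta\,i\f'$, symmetrising gives
\[
R_\f\varphi(x,y)=\tfrac12\iint_{[y,x]^2}\bigl(\theta(s)-\theta(t)\bigr)\sin\bigl(\theta(s)-\theta(t)\bigr)\,ds\,dt,
\]
and $u\sin u\ge 0$ only for $|u|\le\pi$. So this $\varphi$ is guaranteed nonnegative only on arcs over which the tangent angle varies by at most $\pi$; on long arcs of a curve with large total curvature it can be negative, and hence $\varphi\notin P$ in general. Your near-diagonal domination argument still goes through, but you would need a different construction (or a further modification of this one) to secure global membership in $P$. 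This does not invalidate the programme---it just means the ``single $v\in P$'' you seek is not this particular one without further work.
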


Of course, this would be in preparation to prove:

\begin{mconjecture}
There exists a proof of Theorem \ref{main} which does not rely 
on approximation by polygons.
\end{mconjecture}

\bibliographystyle{amsplain}
\bibliography{unfoldARXIV}

\end{document}